\documentclass[a4paper,11pt]{article}
\usepackage[left=1in, right=1in, top=1.5in, bottom=1.5in]{geometry}

\usepackage{amsmath}
\usepackage{amssymb}
\usepackage{graphicx,psfrag}
\usepackage{amsfonts}
\usepackage{amscd}
\usepackage{enumerate,placeins,wrapfig,booktabs,chngcntr}

\usepackage{indentfirst}%

\RequirePackage{algorithm}
\RequirePackage{algorithmic}
\RequirePackage{color}

\usepackage{subcaption} %
\usepackage{multirow} %

\usepackage{nccmath} %

\usepackage{etoolbox} %
\patchcmd{\SetTagPlusEndMark}{$}{}{}{}
\patchcmd{\SetTagPlusEndMark}{$}{}{}{}

\usepackage{hyperref}

\allowdisplaybreaks[4] %

\newtheorem{theorem}{Theorem}[section]
\newtheorem{lemma}[theorem]{Lemma}

\newtheorem{proposition}[theorem]{Proposition}
\newtheorem{definition}[theorem]{Definition}
\newtheorem{remark}[theorem]{Remark} 

\renewenvironment{proof}{\bigskip\noindent\textbf{Proof.}\ \ }{\qed}
\def\qed{ \hfill $\square$}


\title{ %
		First and second-order optimality conditions for a bilinear controlled wave equation on an infinite horizon
 } %
\author{Redouane EL MEZEGUELDY\thanks{Corresponding author. MASI Laboratory, École Normale Supérieure,
		Sidi Mohamed Ben Abdellah University, Fez, Morocco ({redouane.elmezegueldy@usmba.ac.ma}).}		
	\and Zakarya DARDOUR\thanks{L2MASI Laboratory, Faculty of Sciences Dhar El Mahraz, Sidi Mohamed Ben Abdellah University, Fez, Morocco ({zakarya.dardour@usmba.ac.ma}).}
}

\begin{document}

	\date{\empty}
	\maketitle

	\begin{abstract}{
		This paper investigates the optimal control of a bilinear damped wave equation over an infinite time horizon. We establish the well-posedness of the controlled system and derive uniform energy estimates. The existence of optimal controls is proven by constructing a minimizing sequence. We prove that the control-to-state mapping is twice continuously Fréchet differentiable, which enables the derivation of first-order necessary optimality conditions in the form of a variational inequality and a pointwise projection formula. Furthermore, we establish second-order necessary and sufficient conditions: the nonnegativity of the Hessian of the cost functional is shown to be a necessary condition for local optimality, while the coercivity of this Hessian constitutes a sufficient condition. These results provide a complete characterization of local optimality for bilinear hyperbolic control systems over infinite time horizons on bounded spatial domains.		
		}
		
		\vspace{1em}
		\noindent\textbf{Keywords:} Bilinear control, wave equation, optimality conditions, second-order analysis.
		
		\vspace{0.5em}
		\noindent\textbf{2020 Mathematics Subject Classification:} 49K20, 49J20, 35L05, 35Q93
	\end{abstract}

\section{Introduction}
This paper investigates the optimal control of bilinear damped wave equations over infinite time horizons. The partial differential equation under study arises naturally in structural dynamics, modeling the displacement of vibrating structures under the influence of control forces. Structural dynamics encompasses a wide range of mechanical systems, including beams, plates, shells, and other flexible structures, as comprehensively surveyed in \cite{preumont2011vibration}.

A fundamental example is the single-degree-of-freedom system (SDFS), governed by
\begin{equation*}
	m\ddot{y} + c\dot{y} + ky = f(t),
\end{equation*}
where $m$ denotes the mass, $c$ the damping coefficient, $k$ the stiffness, $y$ the displacement, and $f(t)$ the applied external force (see \cite{craig2006fundamentals} for more general models).

While SDFS and their finite-dimensional extensions provide valuable insight, real structures—particularly elastic ones—exhibit spatially distributed behavior that is sensitive to variations across the domain and along boundaries. To capture these dynamics accurately and design effective control strategies, we need an infinite-dimensional formulation where displacement depends on both time and space and boundary effects are rigorously represented.

The control of such systems is of considerable interest, particularly for vibration suppression (see, e.g., \cite{onoda1991vibration, huang2023towards}). Depending on the application, control may enter the system either as an additive external force or as a multiplicative term affecting system parameters. For instance, damping control is employed in vibration isolation problems \cite{liu2008semi}, whereas modulation of stiffness is used for vibration suppression \cite{onoda1992active}. These examples illustrate the different practical mechanisms by which bilinear control arises in structural dynamics.

From a mathematical standpoint, modeling real structures as infinite-dimensional systems leads naturally to bilinear control problems for wave equations. Such problems have been studied for particular models, including Kirchhoff plates and classical wave equations \cite{bradley1994bilinear,liang1999bilinear}. However, a systematic analysis of distributed, space-time dependent bilinear controls over infinite time horizons remains largely unexplored, which motivates the present study.

Specifically, we consider the bilinear controlled system
\begin{equation}
	\label{eq:intro_state}
	\begin{cases}
		\ddot{y} + \dot{y} = \Delta y + u\,y + f, & \text{in } Q, \\
		y = 0, & \text{on } \Sigma, \\
		y(0) = y_0, \quad \dot{y}(0) = y_1, & \text{in } \Omega,
	\end{cases}
\end{equation}
where the state $y = y(t,x)$ satisfies a bilinear damped wave equation. Here, $u$ denotes the control, $f$ represents an external forcing term, and $y_0$ and $y_1$ are the initial displacement and velocity, respectively. The bilinear term $u\,y$ represents a multiplicative control action. The spatial domain $\Omega \subset \mathbb{R}^n$ is bounded with smooth boundary $\partial\Omega$, and the space-time domains are defined by $Q := (0,\infty)\times\Omega$ and $\Sigma := (0,\infty)\times\partial\Omega$. The homogeneous Dirichlet boundary conditions model structures with fixed (clamped) boundaries; extensions to Neumann or Robin conditions are possible but not pursued here.\\
Controls are subject to pointwise box constraints, defining the admissible set
\begin{equation}
	\label{eq:intro_Uad}
	\mathcal{U}_{\mathrm{ ad}} := \bigl\{ u \in L^{\infty}(Q) \,\big|\, \alpha \le u \le \beta \text{ a.e.\ in } Q \bigr\},
\end{equation}
where $\alpha$ and $\beta$ are given functions representing the lower and upper bounds of the control, respectively.\\
Our objective is to minimize the quadratic cost functional
\begin{equation}
	\label{eq:intro_cost}
	J(u) := \frac{1}{2} \int_0^{\infty} \!\! \int_{\Omega} \bigl( |y_u - y_d|^2 + \gamma |u|^2 \bigr) \, \mathrm{d}x \, \mathrm{d}t,
\end{equation}
where $y_u$ is the state corresponding to control $u$, $y_d$ is the desired state trajectory, and $\gamma > 0$ is a regularization parameter penalizing control effort. For stabilization problems, the typical choice is $y_d \equiv 0$, while for tracking applications, $y_d$ represents a reference trajectory that the structure should follow. The infinite-horizon formulation is particularly relevant for:
\begin{itemize}
	\item asymptotic stabilization of flexible structures operating over long time scales,
	\item avoiding artificial terminal boundary effects that arise in finite-horizon approximations of long-duration missions.
\end{itemize}
This leads to the infinite-horizon bilinear optimal control problem
\begin{equation}
	\label{prob:P}
	\tag{P}
	\min_{u \in \mathcal{U}_{\mathrm{ ad}}} J(u).
\end{equation}

Throughout the paper, we assume:
\begin{itemize}
	\item The initial conditions satisfy $y_0 \in H_0^1(\Omega)$ and $y_1 \in L^2(\Omega)$,
	\item The external forcing term $f$ belongs to $L^{\infty}(0,\infty;L^2(\Omega)) \cap L^2(Q)$,
	\item The desired state $y_d$ satisfies $y_d \in L^{\infty}(0,\infty;L^2(\Omega)) \cap L^2(Q)$,
	\item The control bounds satisfy $\alpha,\beta \in L^2(0,\infty;L^{\infty}(\Omega)) \cap L^{\infty}(Q)$ with $\alpha \le \beta$ almost everywhere.
\end{itemize}

Under these assumptions, our objectives are to establish well-posedness and uniform energy estimates for \eqref{eq:intro_state}, prove existence of optimal controls solving \eqref{prob:P}, analyze differentiability of the control-to-state mapping, derive first-order necessary optimality conditions, and obtain second-order necessary and sufficient optimality conditions that provide refined characterizations of local optimality.

The analysis of problem \eqref{prob:P} presents significant technical challenges due to the bilinear structure and the infinite-horizon setting. The choice of the control space $\mathcal{U}_{\mathrm{ad}} \subset L^{\infty}(Q)$ is dictated by these difficulties. In the spatial variable, if $u \in L^2(\Omega)$ only, the bilinear term $u\,y$ would not be well-defined as an element of the state space $L^2(\Omega)$ in general, which necessitates $u \in L^{\infty}(\Omega)$. For the temporal variable, when finite horizons are considered, one can typically assume $u \in L^2(0,T;L^{\infty}(\Omega))$ to investigate the problem. However, when infinite time horizons are considered, neither $L^2(0,\infty;L^{\infty}(\Omega))$ nor $L^{\infty}(Q)$ regularity alone is sufficient to deal with the problem, as deriving the estimates for $\ddot{y}$ necessitates their intersection $L^2(0,\infty;L^{\infty}(\Omega)) \cap L^{\infty}(Q)$.

The analysis of problem \eqref{prob:P} presents significant technical challenges arising from the interplay between the bilinear structure, hyperbolic dynamics, and infinite-horizon setting. The choice of control space $\mathcal{U}_{\mathrm{ad}} \subset L^{\infty}(Q)$ is dictated by these difficulties. In the spatial variable, since the state $y$ belongs to $L^2(\Omega)$, the bilinear term $u\,y$ must also lie in $L^2(\Omega)$ for well-posedness of the weak formulation. If we only assumed $u \in L^2(\Omega)$, the product $u\,y$ would generally belong to $L^1(\Omega)$ rather than $L^2(\Omega)$, which necessitates $u \in L^{\infty}(\Omega)$ at almost every time. For the temporal variable, when finite horizons are considered, one can typically assume $u \in L^2(0,T;L^{\infty}(\Omega))$ to control the bilinear term through Hölder's inequality. However, the infinite-horizon setting introduces additional complications: establishing the energy estimate \eqref{eq:energy_est_infinite_2} for $\ddot{y}$ requires controlling integrals of the form $\int_0^{\infty} \|u(t)\|_{L^{\infty}(\Omega)}^2 \|\nabla y(t)\|^2 \, \mathrm{d}t$ in the Grönwall argument, which demands $u \in L^2(0,\infty;L^{\infty}(\Omega))$. Simultaneously, uniform-in-time bounds on $\|\ddot{y}(t)\|_{H^{-1}(\Omega)}$ obtained from the identity $\ddot{y}(t) = -\Delta y(t) - \dot{y}(t) + u(t)y(t) + f(t)$ require $u \in L^{\infty}(Q)$. Consequently, neither $L^2(0,\infty;L^{\infty}(\Omega))$ nor $L^{\infty}(Q)$ alone suffices, as the $L^2$-temporal integrability is essential for the Grönwall estimate leading to \eqref{eq:energy_est_infinite_1}, while $L^{\infty}$-uniform boundedness is critical for the pointwise estimate \eqref{eq:energy_est_infinite_2}. This forces us to work in the intersection space $\mathcal{U} := L^2(0,\infty;L^{\infty}(\Omega)) \cap L^{\infty}(Q)$.

Our methodological approach proceeds by first establishing well-posedness and energy estimates on finite intervals $[0,T]$ via Galerkin approximation and Grönwall's inequality (Lemma~\ref{lem:energy_estimates_finite}), where constants depend explicitly on $T$. The key innovation lies in proving that these constants remain bounded uniformly in $T$ when $u \in \mathcal{U}$ (Theorem~\ref{lem:energy_estimates_infinite}), exploiting the monotonicity of the norms in $T$ to pass to the limit as $T \to \infty$. The adjoint equation \eqref{eq:adjoint_phi_system}, formulated as a backward problem with terminal conditions at infinity, is treated via the change of variables $s = T - t$ to obtain a forward problem, allowing application of uniform estimates and careful limit passage to ensure the asymptotic decay $\|\phi(t)\| + \|\dot{\phi}(t)\|_{H^{-1}} \to 0$ as $t \to \infty$ (Lemma~\ref{lem:adjoint_wellposed}). This decay is essential for integration by parts in deriving optimality conditions. Twice continuous Fréchet differentiability of the control-to-state mapping (Theorem~\ref{thm:differentiability}) is established via the implicit function theorem, formulating the state equation as $F(u,y) = 0$ and verifying that $\partial_y F(u,y_u)$ is bijective with bounded inverse independently of the time horizon.

These technical requirements distinguish our infinite-horizon bilinear wave problem from related work. For finite-horizon bilinear control \cite{bradley1994bilinear, liang1999bilinear, zerrik2019regional}, the space $L^2(0,T;L^{\infty}(\Omega))$ suffices and constants may depend on $T$, whereas the infinite horizon necessitates the more restrictive intersection space. For additive control of wave equations \cite{kunisch2016optimal, kroner2011semismooth}, the control enters linearly as $u$ rather than $u\,y$, avoiding the multiplicative coupling that requires $L^{\infty}$-spatial regularity, allowing use of $L^2(Q_T)$ or measure spaces. For infinite-horizon parabolic problems with bilinear control \cite{Reference14}, the smoothing effect of the heat kernel provides additional regularity, whereas the hyperbolic nature of the wave equation offers no such regularization, requiring more delicate treatment of $\ddot{y}$.

Optimal control problems for wave equations have been extensively studied, with particular attention to different control mechanisms and time horizons. Most existing work focuses on finite-horizon problems with additive controls. For instance, Kunisch et al.\ \cite{kunisch2016optimal} analyzed measure-valued optimal control problems for the linear wave equation with sparsity-inducing total variation penalties, establishing regularity results and first-order optimality conditions in all three space dimensions. Similarly, Kröner et al.\ \cite{kroner2011semismooth} investigated distributed, Neumann, and Dirichlet boundary controls with inequality constraints, employing semismooth Newton methods to achieve superlinear convergence, while Gugat and Grimm \cite{gugat2011optimal} studied Dirichlet boundary control with pointwise constraints for string stabilization problems. Discretization aspects and efficient solution methods have also received considerable attention, as exemplified by Liu and Pearson \cite{liu2020parameter}, who developed matching-type Schur complement preconditioners for the optimality system arising from space-time discretizations of wave control problems.

For bilinear control problems, where the control acts multiplicatively on the state, the literature is more limited. Early work by Bradley and Lenhart \cite{bradley1994bilinear} and Liang \cite{liang1999bilinear} established existence of optimal controls and derived first-order optimality conditions through formal differentiation of the cost functional for Kirchhoff plates and wave equations, respectively. More recently, Zerrik and El Kabouss \cite{zerrik2019regional} and Ait Aadi et al.\ \cite{ait2024regional} extended this framework to regional control objectives, where the desired state is prescribed only on a subregion of the spatial domain, while Clason et al.\ \cite{clason2021optimal} considered optimal control problems where the control enters as a coefficient in the principal part of the wave equation, employing total variation and multi-bang penalties to promote desired structural properties. In contrast to distributed controls, Bethke and Kröner \cite{bethke2018sufficient} analyzed the case of a scalar time-dependent control acting as a damping parameter, deriving second-order sufficient optimality conditions for both regular and singular cases using the Goh transformation. However, all these works are restricted to finite time horizons, and apart from \cite{bethke2018sufficient}, provide only first-order optimality conditions.

Second-order optimality analysis, which provides refined characterizations of local optimality and is essential for numerical solution methods, has been developed primarily for parabolic and elliptic systems \cite{casas2012second}. For bilinear systems, Aronna and Tröltzsch \cite{aronna2021first} established first- and second-order necessary and sufficient conditions for optimal control of the Fokker--Planck equation with bilinear control on finite horizons, while El Mezegueldy et al.\ \cite{Reference14} extended this framework to infinite-horizon problems with spatially localized controls, proving well-posedness and deriving complete second-order optimality conditions. For elliptic problems with bilinear controls, Casas et al.\ \cite{casas2024error} analyzed finite element discretizations and obtained error estimates under no-gap second-order sufficient conditions, revealing superconvergence phenomena in numerical experiments. However, second-order optimality analysis for hyperbolic systems, particularly wave equations, remains largely unexplored in the infinite-horizon context.

The present work addresses this gap by establishing both first- and second-order optimality conditions for distributed bilinear control of the damped wave equation on an infinite time horizon. To the best of our knowledge, this combination of bilinear structure, hyperbolic dynamics, and infinite-horizon analysis with complete second-order characterization has not been previously addressed. Our results provide a rigorous mathematical foundation for designing optimal adaptive control strategies in large flexible structures operating over extended time periods, with applications ranging from long-duration space missions to civil infrastructure monitoring and control.

The paper is organized as follows. Section~\ref{sec:wellposed} establishes well-posedness and regularity results for the state equation, including energy estimates on both finite and infinite horizons, and proves Lipschitz continuity of the control-to-state mapping. Section~\ref{sec:differentiability} introduces the adjoint equation, applies the implicit function theorem to show that the control-to-state mapping is twice continuously Fr\'{e}chet differentiable, and derives differentiability properties of the cost functional. Section~\ref{sec:firstorder} proves existence of optimal controls and derives first-order necessary optimality conditions. Section~\ref{sec:secondorder} develops second-order necessary and sufficient optimality conditions. Finally, Section~\ref{sec:conclusion} concludes with a summary and directions for future research.

\section{Well-posedness and regularity of the state equation}
\label{sec:wellposed}

This section establishes the mathematical foundation for the optimal control problem. We introduce a weak formulation of the state equation and provide standard energy estimates, which are essential for the subsequent optimality analysis.

We use standard notation from functional analysis and PDE theory. The inner product and norm in $L^2(\Omega)$ are denoted by $\langle \cdot, \cdot \rangle$ and $\|\cdot\|$, respectively. We work in Sobolev spaces $H^1_0(\Omega)$ and $H^{-1}(\Omega)$, where $\langle \cdot, \cdot \rangle_*$ denotes the duality pairing between $H^{-1}(\Omega)$ and $H^1_0(\Omega)$. For time-dependent functions, we use Bochner spaces $L^p(0,T;X)$ and $W^{k,p}(0,T;X)$. The space-time cylinder over a finite horizon is denoted $Q_T := (0,T) \times \Omega$. For notational simplicity, we often omit explicit dependence on the variables $(t,x)$ when no confusion arises, and we write $y_u$ for the state associated with control $u$.

\begin{definition}[Weak solution]
	\label{def:weak_solution}
	Let $u \in \mathcal{U}_{ad}$ and $T > 0$. A function 
	\[
	y \in C([0,T];H_0^1(\Omega)) \cap C^1([0,T];L^2(\Omega))
	\]
	is called a \emph{weak solution} of the state equation \eqref{eq:intro_state} on $[0,T]$ if $y(0)=y_0$, $\dot{y}(0)=y_1$, and for every $v \in H_0^1(\Omega)$ and almost every $t \in (0,T)$,
	\[
	\langle \ddot{y}(t)+\dot y(t), v \rangle_* + \langle \nabla y(t), \nabla v \rangle 
	= \langle u(t) y(t) + f(t), v \rangle.
	\]
\end{definition}

\begin{remark}
	\label{rem:initial_values}
	The initial values $y(0)$ and $\dot{y}(0)$ are well defined since
	\[
	L^2(0,T;H_0^1(\Omega)) \cap W^{2,1}(0,T;H^{-1}(\Omega)) \hookrightarrow 
	C([0,T];L^2(\Omega)) \cap C^1([0,T];H^{-1}(\Omega)).
	\]
\end{remark}

We now recall the standard Poincaré inequality.
\begin{lemma}
	\label{lem:poincare}
	Let $\Omega \subset \mathbb{R}^n$ be a bounded domain. Then there exists a constant $c_{\Omega}>0$ depending only on $\Omega$ such that
	\[
	\|v\|^2 \le c_{\Omega} \|\nabla v\|^2 \quad \text{for all } v \in H_0^1(\Omega).
	\]
\end{lemma}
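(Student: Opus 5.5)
The plan is to argue by contradiction using the compactness of the embedding $H_0^1(\Omega)\hookrightarrow L^2(\Omega)$ (Rellich--Kondrachov), which holds for any bounded domain. No regularity of $\partial\Omega$ is needed, because we can extend by zero. Alternatively, one could give a direct elementary proof. Enclose $\Omega$ in a slab $\{x : a < x_1 < b\}$ with $d := b-a$, which is possible since $\Omega$ is bounded. It suffices to treat $v \in C_c^\infty(\Omega)$ and then conclude by density of $C_c^\infty(\Omega)$ in $H_0^1(\Omega)$, because both sides of the inequality are continuous in the $H^1$ norm. I will follow this direct route, since it gives an explicit constant.

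For $v \in C_c^\infty(\Omega)$, extended by zero to all of $\mathbb{R}^n$, the fundamental theorem of calculus in the $x_1$ direction gives
\[
v(x_1,x') = \int_a^{x_1} \partial_1 v(s,x')\,\mathrm{d}s .
\]
By Cauchy--Schwarz this yields $|v(x_1,x')|^2 \le d \int_a^b |\partial_1 v(s,x')|^2\,\mathrm{d}s$. Integrating first over $x_1 \in (a,b)$ and then over $x' \in \mathbb{R}^{n-1}$ gives
\[
\|v\|^2 \le d^2 \|\partial_1 v\|^2 \le d^2 \|\nabla v\|^2 .
\]
So the constant $c_\Omega := d^2$ works and depends only on $\Omega$.

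The last step is to extend the inequality to $H_0^1(\Omega)$. Take $v_k \in C_c^\infty(\Omega)$ with $v_k \to v$ in $H^1$. Then $\|v_k\| \to \|v\|$ and $\|\nabla v_k\| \to \|\nabla v\|$, so the inequality passes to the limit. The only point needing care is the zero extension used in the one-dimensional integration. This is legitimate for compactly supported smooth functions, so there is no real obstacle. The main subtlety is that one must work with the dense class $C_c^\infty(\Omega)$ first, rather than with general $H_0^1$ functions, for which the pointwise fundamental theorem of calculus is not directly available.
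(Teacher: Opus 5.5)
Your proof is correct. Note that the paper gives no proof of this lemma: it only recalls the Poincar\'e inequality as a standard fact. So there is no argument of the paper's to compare yours against, and you have supplied a complete proof of something the paper takes for granted.

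Your slab argument is the classical textbook proof, and each step checks out:
\begin{itemize}
\item For $v \in C_c^\infty(\Omega)$ the zero extension is smooth on $\mathbb{R}^n$ and vanishes at $x_1 = a$, so the one-dimensional fundamental theorem of calculus applies.
\item Cauchy--Schwarz followed by integration in $x_1$ and then in $x'$ gives $\|v\|^2 \le d^2\|\partial_1 v\|^2$.
\item The density step needs no boundary regularity, because $H_0^1(\Omega)$ is by definition the $H^1$-closure of $C_c^\infty(\Omega)$.
\end{itemize}
Keeping the factor $x_1 - a$ in the Cauchy--Schwarz step would sharpen the constant to $d^2/2$, though that is immaterial here.

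Your chosen route has an advantage over the compactness-by-contradiction argument you mention in passing. That argument yields only the existence of some constant, with the optimal one being $1/\lambda_1$, the reciprocal of the first Dirichlet eigenvalue. The direct route gives an explicit $c_\Omega$ in terms of the width of $\Omega$. That in turn makes the constants $c_u$ and $c_u'$ in the paper's energy estimates explicitly computable.
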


The next result establishes existence and uniqueness of solutions to the wave equation.
\begin{lemma}
	\label{lem:energy_estimates_finite}
	Let $T > 0$ and assume that $y_0 \in H_0^1(\Omega)$, $y_1 \in L^2(\Omega)$, 
	$f \in L^2(Q_T) \cap L^{\infty}(0,T;L^2(\Omega))$, 
	and $u \in L^2(0,T;L^{\infty}(\Omega)) \cap L^{\infty}(Q_T)$. 
	Then there exists a unique weak solution $y$ in the sense of Definition~\ref{def:weak_solution} satisfying
	\begin{equation}
		\label{eq:energy_est_finite_1}
		\|y\|_{C([0,T];H_0^1(\Omega))}^2 
		+ \|\dot{y}\|_{C([0,T];L^2(\Omega))}^2
		\le 
		c_{u,T}\Big(
		\|y_0\|_{H_0^1(\Omega)}^2 + \|y_1\|^2 
		+ \|f\|_{L^2(Q_T)}^2
		\Big),
	\end{equation}
	and
	\begin{equation}
		\label{eq:energy_est_finite_2}
		\|\ddot{y}\|_{L^{\infty}(0,T;H^{-1}(\Omega))}^2
		\le 
		c_{u,T}'\Big(
		\|y_0\|_{H_0^1(\Omega)}^2 + \|y_1\|^2 + \|f\|_{L^2(Q_T)}^2
		\Big)
		+ 3\|f\|_{L^{\infty}(0,T;L^2(\Omega))}^2,
	\end{equation}
	where the constants are given by
	\[
	c_{u,T} := \exp\!\big(c_{\Omega}\|u\|_{L^2(0,T;L^\infty(\Omega))}^2\big),
	\]
	\[
	c_{u,T}' := 3\bigl(1 + \sqrt{c_{\Omega}}\|u\|_{L^\infty(Q_T)}\bigr)^2 
	\exp\!\big(c_{\Omega}\|u\|_{L^2(0,T;L^\infty(\Omega))}^2\big).
	\]
\end{lemma}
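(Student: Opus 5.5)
We will use a Faedo--Galerkin scheme, derive a priori estimates uniform in the discretization parameter, pass to the limit, and then prove uniqueness by a separate energy argument on the difference of two solutions.

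\textbf{Galerkin approximation.} Let $(w_k)_{k\ge1}$ be the eigenfunctions of $-\Delta$ with Dirichlet condition, orthonormal in $L^2(\Omega)$ and orthogonal in $H_0^1(\Omega)$. We seek $y_m(t)=\sum_{k\le m} g_{km}(t)w_k$ solving, for $k=1,\dots,m$,
\[
\langle \ddot y_m+\dot y_m,w_k\rangle+\langle\nabla y_m,\nabla w_k\rangle=\langle u y_m+f,w_k\rangle ,
\]
with $y_m(0)$ and $\dot y_m(0)$ the orthogonal projections of $y_0$ and $y_1$. This is a linear ODE system with coefficients $\langle u(t)w_j,w_k\rangle\in L^\infty(0,T)$ and forcing in $L^2(0,T)$. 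Carathéodory theory therefore gives a unique solution $g_m\in W^{2,1}(0,T)$ on all of $[0,T]$.

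\textbf{Energy estimate.} Multiply by $\dot g_{km}$ and sum over $k$ to obtain
\[
\tfrac12\tfrac{d}{dt}\bigl(\|\dot y_m\|^2+\|\nabla y_m\|^2\bigr)+\|\dot y_m\|^2=\langle u y_m+f,\dot y_m\rangle .
\]
Set $E_m(t):=\|\dot y_m\|^2+\|\nabla y_m\|^2$. Using Lemma~\ref{lem:poincare}, the forcing terms satisfy
\[
2|\langle u y_m,\dot y_m\rangle|\le \|u(t)\|_{L^\infty(\Omega)}^2\|y_m\|^2+\|\dot y_m\|^2\le c_\Omega\|u(t)\|_{L^\infty(\Omega)}^2\|\nabla y_m\|^2+\|\dot y_m\|^2,
\]
\[
2|\langle f,\dot y_m\rangle|\le\|f\|^2+\|\dot y_m\|^2 .
\]
The damping term $2\|\dot y_m\|^2$ absorbs the two copies of $\|\dot y_m\|^2$, which leaves
\[
E_m'(t)\le c_\Omega\|u(t)\|_{L^\infty(\Omega)}^2E_m(t)+\|f(t)\|^2 .
\]
Grönwall's inequality then gives
\[
E_m(t)\le \exp\!\bigl(c_\Omega\|u\|^2_{L^2(0,T;L^\infty(\Omega))}\bigr)\bigl(\|\nabla y_0\|^2+\|y_1\|^2+\|f\|^2_{L^2(Q_T)}\bigr).
\]
This is \eqref{eq:energy_est_finite_1} at the Galerkin level, with exactly the constant $c_{u,T}$. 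The key design choice is to let the damping absorb the $\|\dot y_m\|^2$ terms, so that no $T$-dependent factor appears.

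\textbf{Second-derivative bound.} For $v\in H_0^1(\Omega)$ with $\|\nabla v\|\le1$, split $v=Pv+(I-P)v$, where $P$ is the projection onto $\mathrm{span}\{w_1,\dots,w_m\}$. Because the eigenbasis is orthogonal in $H_0^1(\Omega)$, $P$ is a contraction there as well as in $L^2(\Omega)$, and $\ddot y_m\in\mathrm{span}\{w_1,\dots,w_m\}$. Hence
\[
\|\ddot y_m\|_{H^{-1}}\le \|\nabla y_m\|+\sqrt{c_\Omega}\,\|\dot y_m\|+\sqrt{c_\Omega}\,\|u\|_{L^\infty(Q_T)}\|y_m\|+\sqrt{c_\Omega}\,\|f\|.
\]
Bound $\|y_m\|\le \sqrt{c_\Omega}\|\nabla y_m\|$ and group the terms into three summands,
\[
A=(1+\sqrt{c_\Omega}\|u\|_{L^\infty(Q_T)})\sqrt{E_m}\ \text{(up to constants)},\qquad B\sim \|\dot y_m\|,\qquad C=\|f\|.
\]
Applying $(a+b+c)^2\le3(a^2+b^2+c^2)$ then yields the shape of \eqref{eq:energy_est_finite_2}, including the term $3\|f\|^2_{L^\infty(0,T;L^2(\Omega))}$. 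I expect this to be the main obstacle. The powers of $c_\Omega$ must be placed so that the constant collapses exactly to $c'_{u,T}$, and this may require assuming a normalization such as $c_\Omega\le1$, or absorbing the $\dot y$ term into the $E_m$ factor. I would first try bounding $\|\dot y_m\|_{H^{-1}}+\|\nabla y_m\|\le \sqrt{2E_m}$-type combinations and check whether the constant matches; if it does not, I would accept an equivalent constant.

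\textbf{Passage to the limit and uniqueness.} The bounds above give, along a subsequence,
\[
y_m\rightharpoonup^* y \text{ in } L^\infty(0,T;H_0^1),\quad \dot y_m\rightharpoonup^*\dot y \text{ in } L^\infty(0,T;L^2),\quad \ddot y_m\rightharpoonup^*\ddot y \text{ in } L^\infty(0,T;H^{-1}).
\]
The equation is linear in $y$ and $u$ is fixed, so $u y_m\rightharpoonup u y$. Passing to the limit in the Galerkin identities tested against $\varphi(t)w_k$ yields the weak formulation, and the initial data are recovered via Remark~\ref{rem:initial_values}. The estimates pass to the limit by weak-$*$ lower semicontinuity. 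The continuity $y\in C([0,T];H_0^1)\cap C^1([0,T];L^2)$ follows from the standard Lions--Magenes argument, namely the energy identity via regularization, and this same argument upgrades the bounds to the $C$-norms. Uniqueness follows by applying the same Grönwall estimate, justified via the Lions--Magenes energy equality, to the difference of two solutions, which solves the problem with zero data.
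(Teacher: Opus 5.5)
Your route is the paper's: test with $\dot y$, let the damping absorb both Young's-inequality copies of $\|\dot y\|^2$, and use Poincar\'e and Gr\"onwall to get the pointwise bound $\|\nabla y(t)\|^2+\|\dot y(t)\|^2\le c_{u,T}\bigl(\|\nabla y_0\|^2+\|y_1\|^2+\|f\|_{L^2(Q_T)}^2\bigr)$. You then read $\ddot y$ off the equation in $H^{-1}(\Omega)$. The paper does these estimates formally on the solution and only cites the Galerkin method, so your Galerkin construction, limit passage and uniqueness argument supply what it omits.

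You leave open the exact constant in \eqref{eq:energy_est_finite_2}. The paper handles it by writing $\|\ddot y(t)\|_{H^{-1}}\le\|\nabla y(t)\|+\|\dot y(t)\|+\|u(t)\|_{L^\infty}\|y(t)\|+\|f(t)\|$. That is, it bounds every $L^2$ term in $H^{-1}$ with constant one and applies Poincar\'e only to $\|y(t)\|$. Your second option then finishes it: since $(1+\sqrt{c_\Omega}\|u\|_{L^\infty(Q_T)})^2\ge 1$, the term $3\|\dot y(t)\|^2$ is absorbed into $3(1+\sqrt{c_\Omega}\|u\|_{L^\infty(Q_T)})^2\bigl(\|\nabla y(t)\|^2+\|\dot y(t)\|^2\bigr)$, and the pointwise energy bound gives exactly $c'_{u,T}$.

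Your caution is nevertheless justified. The bound $\|g\|_{H^{-1}}\le\|g\|$ holds for the norm dual to the full $H^1$ norm. For the norm dual to $\|\nabla\cdot\|$, which is the one you use, only $\|g\|_{H^{-1}}\le\sqrt{c_\Omega}\,\|g\|$ is available, and then the stated bound is false in general. Take $u=f=0$, $y_0=0$, and $y_1=w_1$, the first $L^2$-normalized Dirichlet eigenfunction with eigenvalue $\lambda_1$. Then $\ddot y(0)=-w_1$, so $\|\ddot y\|_{L^\infty(0,T;H^{-1})}^2\ge 1/\lambda_1$, while the right-hand side of \eqref{eq:energy_est_finite_2} equals $3$; this fails whenever $\lambda_1<1/3$.

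So there are two fixes. You can adopt the full-norm dual convention; your projection argument then reproduces the paper's inequality verbatim, since $P$ contracts both $\|\cdot\|$ and $\|\nabla\cdot\|$. Or you can carry a factor $\max\{1,c_\Omega\}$ on the right-hand side. Either way, the defect is in the statement, not in your argument.

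A related slip, shared with the paper, affects your claim of ``exactly the constant $c_{u,T}$'' in \eqref{eq:energy_est_finite_1}. The pointwise bound controls $\sup_t\bigl(\|\nabla y(t)\|^2+\|\dot y(t)\|^2\bigr)$. The left-hand side of \eqref{eq:energy_est_finite_1}, however, is a sum of two separate suprema, which in general is only bounded by $2c_{u,T}(\cdots)$. For example, with $u=f=0$, $y_0=w_1$, $y_1=0$, the first supremum alone already equals the right-hand side, while $\sup_t\|\dot y(t)\|^2>0$.
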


\begin{proof}
	We first derive the a priori estimates assuming a solution exists, then conclude with the existence and uniqueness result. Testing the weak formulation with $v = \dot{y}(t)$ yields
	\[
	\langle \ddot{y}(t), \dot{y}(t) \rangle_* + \langle \dot{y}(t), \dot{y}(t) \rangle + \langle \nabla y(t), \nabla \dot{y}(t) \rangle 
	= \langle u(t) y(t) + f(t), \dot{y}(t) \rangle.
	\]
	Since $\langle \ddot{y}, \dot{y} \rangle_* = \frac{1}{2}\frac{d}{dt}\|\dot{y}\|^2$ and $\langle \nabla y, \nabla \dot{y} \rangle = \frac{1}{2}\frac{d}{dt}\|\nabla y\|^2$, we obtain
	\[
	\frac{d}{dt}\Big(\frac{1}{2}\|\dot{y}(t)\|^2 + \frac{1}{2}\|\nabla y(t)\|^2\Big) + \|\dot{y}(t)\|^2 
	= \langle u(t) y(t), \dot{y}(t) \rangle + \langle f(t), \dot{y}(t) \rangle.
	\]
	Defining the energy functional $E(t) := \frac{1}{2}\|\dot{y}(t)\|^2 + \frac{1}{2}\|\nabla y(t)\|^2$, we have
	\[
	\frac{d}{dt}E(t) + \|\dot{y}(t)\|^2 = \langle u(t) y(t), \dot{y}(t) \rangle + \langle f(t), \dot{y}(t) \rangle.
	\]
	By the Cauchy--Schwarz and Young inequalities,
	\[
	\langle u y, \dot{y} \rangle \le \|u\|_{L^\infty}\|y\|\|\dot{y}\| 
	\le \frac{1}{2}\|u\|_{L^\infty}^2 \|y\|^2 + \frac{1}{2}\|\dot{y}\|^2,
	\]
	\[
	\langle f, \dot{y} \rangle \le \|f\|\|\dot{y}\| \le \frac{1}{2}\|f\|^2 + \frac{1}{2}\|\dot{y}\|^2.
	\]
	Hence,
	\[
	\frac{d}{dt}E(t) \le \frac{1}{2}\|u(t)\|_{L^\infty}^2 \|y(t)\|^2 + \frac{1}{2}\|f(t)\|^2.
	\]
	By Poincaré's inequality, $\|y(t)\|^2 \le c_{\Omega} \|\nabla y(t)\|^2 \le 2c_{\Omega} E(t)$, where $c_{\Omega}$ depends only on $\Omega$. Thus,
	\[
	\frac{d}{dt}2E(t) \le c_{\Omega}\|u(t)\|_{L^\infty}^2 2E(t) + \|f(t)\|^2.
	\]
	Integrating from $0$ to $t$ gives
	\[
	2E(t) \le 2E(0) + \int_0^t \|f(s)\|^2 ds +  \int_0^t c_{\Omega}\|u(s)\|_{L^\infty}^2 2E(s) ds.
	\]
	Applying the standard Grönwall inequality yields
	\[
	2E(t) \le \exp\Big(c_{\Omega}\int_0^t \|u(s)\|_{L^\infty}^2 ds\Big) \Big(2E(0) + \int_0^t \|f(s)\|^2 ds\Big).
	\]
	Since $\int_0^t \|u(s)\|_{L^\infty}^2 ds \le \|u\|_{L^2(0,T;L^\infty(\Omega))}^2$ and $\int_0^t \|f(s)\|^2 ds \le \|f\|_{L^2(0,T;L^2(\Omega))}^2$, we obtain
	\[
	2E(t) \le \exp\Big(c_{\Omega}\|u\|_{L^2(0,T;L^\infty)}^2\Big) \Big(2E(0) +  \|f\|_{L^2(0,T;L^2)}^2\Big).
	\]
	Recalling that $E(0) = \frac{1}{2}\|y_1\|^2 + \frac{1}{2}\|\nabla y_0\|^2$ and $E(t) = \frac{1}{2}\|\dot{y}(t)\|^2 + \frac{1}{2}\|\nabla y(t)\|^2$, we deduce
	\[
	\|\dot{y}(t)\|^2 + \|\nabla y(t)\|^2 \le  \exp\Big(c_{\Omega}\|u\|_{L^2(0,T;L^\infty)}^2\Big) \Big(\|y_0\|_{H_0^1}^2 + \|y_1\|^2 + \|f\|_{L^2(0,T;L^2)}^2\Big),
	\]
	Taking the supremum over $t \in [0,T]$ and square roots gives \eqref{eq:energy_est_finite_1}.

	For the estimate of $\ddot{y}$, we use the weak formulation to write
	\[
	\ddot{y}(t) = -\Delta y(t) - \dot{y}(t) + u(t)y(t) + f(t) \quad \text{in } H^{-1}(\Omega).
	\]
	Taking norms, we have
	\[
	\|\ddot{y}(t)\|_{H^{-1}} \le \|\nabla y(t)\| + \|\dot{y}(t)\| + \|u(t)\|_{L^\infty}\|y(t)\| + \|f(t)\|.
	\]
	By the Poincaré inequality (Lemma~\ref{lem:poincare}), $\|y(t)\| \le \sqrt{c_{\Omega}}\|\nabla y(t)\|$. Therefore,
	\[
	\|\ddot{y}(t)\|_{H^{-1}} \le (1 + \sqrt{c_{\Omega}}\|u(t)\|_{L^\infty})\|\nabla y(t)\| + \|\dot{y}(t)\| + \|f(t)\|.
	\]
	Squaring both sides and using $(a+b+c)^2 \le 3(a^2+b^2+c^2)$, we obtain
	\[
	\|\ddot{y}(t)\|_{H^{-1}}^2 \le 3(1 + \sqrt{c_{\Omega}}\|u(t)\|_{L^\infty})^2\|\nabla y(t)\|^2 + 3\|\dot{y}(t)\|^2 + 3\|f(t)\|^2.
	\]
	Taking the supremum over $t \in [0,T]$ and using \eqref{eq:energy_est_finite_1}, we get
	\[
	\begin{aligned}
		\|\ddot{y}\|_{L^{\infty}(0,T;H^{-1})}^2 
		&\le 3(1 + \sqrt{c_{\Omega}}\|u\|_{L^\infty(Q_T)})^2 \sup_{t\in[0,T]}\|\nabla y(t)\|^2 \\
		&\quad + 3\sup_{t\in[0,T]}\|\dot{y}(t)\|^2 + 3\|f\|_{L^{\infty}(0,T;L^2)}^2 \\
		&\le 3(1 + \sqrt{c_{\Omega}}\|u\|_{L^\infty(Q_T)})^2 \exp\Big(c_{\Omega}\|u\|_{L^2(0,T;L^\infty)}^2\Big) \\
		&\quad \times \Big(\|y_0\|_{H_0^1}^2 + \|y_1\|^2 + \|f\|_{L^2(0,T;L^2)}^2\Big) + 3\|f\|_{L^{\infty}(0,T;L^2)}^2.
	\end{aligned}
	\]
	This yields \eqref{eq:energy_est_finite_2}.
	
	The existence and uniqueness of the weak solution follow from the standard Galerkin method combined with the a priori estimates derived above.
\end{proof}

Building on the previous lemma, we obtain the following result that ensures well-posedness of the state equation over an infinite time horizon.
\begin{theorem}
	\label{lem:energy_estimates_infinite}
	Assume that $y_0 \in H_0^1(\Omega)$, $y_1 \in L^2(\Omega)$, 
	$f \in L^2(Q) \cap L^{\infty}(0,\infty;L^2(\Omega))$, 
	and $u \in L^2(0,\infty;L^{\infty}(\Omega)) \cap L^{\infty}(Q)$. 
	Then the weak solution $y$ of the state equation \eqref{eq:intro_state} satisfies
	\begin{equation}
		\label{eq:energy_est_infinite_1}
		\begin{aligned}
			\|y\|_{L^{\infty}(0,\infty;H_0^1(\Omega))}^2
			+ \|\dot{y}\|_{L^{\infty}(0,\infty;L^2(\Omega))}^2 
			\le c_u\Big(
			\|y_0\|_{H_0^1(\Omega)}^2
			+ \|y_1\|^2
			+ \|f\|_{L^2(Q)}^2
			\Big),
		\end{aligned}
	\end{equation}
	and
	\begin{equation}
		\label{eq:energy_est_infinite_2}
		\begin{aligned}
			\|\ddot{y}\|_{L^{\infty}(0,\infty;H^{-1}(\Omega))}^2
			&\le c_u'\Big(
			\|y_0\|_{H_0^1(\Omega)}^2
			+ \|y_1\|^2
			+ \|f\|_{L^2(Q)}^2
			\Big)
			+ 3\|f\|_{L^{\infty}(0,\infty;L^2(\Omega))}^2,
		\end{aligned}
	\end{equation}
	where
	\[
	c_u := \exp\!\big(c_\Omega \|u\|_{L^2(0,\infty;L^\infty(\Omega))}^2\big),
	\]
	\[
	c_u' := 3(1 + \sqrt{c_{\Omega}}\|u\|_{L^\infty(Q)})^2 \exp\!\big(c_{\Omega}\|u\|_{L^2(0,\infty;L^\infty(\Omega))}^2\big).
	\]
\end{theorem}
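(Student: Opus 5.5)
The plan is to obtain the infinite-horizon estimates directly from Lemma~\ref{lem:energy_estimates_finite}, applied on every finite interval $[0,T]$, and then let $T\to\infty$. The finite-horizon constants are monotone in $T$ and bounded by their $T=\infty$ counterparts.

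\textbf{Step 1: a global solution.} For each $T>0$ the restrictions of $u$ and $f$ to $Q_T$ satisfy the hypotheses of Lemma~\ref{lem:energy_estimates_finite}. So there is a unique weak solution $y^T$ on $[0,T]$. By uniqueness, $y^{T'}$ restricted to $[0,T]$ equals $y^T$ whenever $T<T'$. Hence the solutions patch together into a single function $y$ on $[0,\infty)$ with $y\in C([0,T];H_0^1(\Omega))\cap C^1([0,T];L^2(\Omega))$ for every $T$. This $y$ is the weak solution of \eqref{eq:intro_state} on $(0,\infty)$.

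\textbf{Step 2: uniform bounds on each $[0,T]$.} The maps $T\mapsto \|u\|_{L^2(0,T;L^\infty(\Omega))}$, $T\mapsto\|u\|_{L^\infty(Q_T)}$, $T\mapsto\|f\|_{L^2(Q_T)}$ and $T\mapsto \|f\|_{L^\infty(0,T;L^2(\Omega))}$ are nondecreasing. They are bounded by the corresponding norms on $(0,\infty)$, which are finite by assumption. Consequently $c_{u,T}\le c_u$ and $c_{u,T}'\le c_u'$, because $\exp$ and $s\mapsto(1+\sqrt{c_\Omega}s)^2$ are increasing on $[0,\infty)$. Estimates \eqref{eq:energy_est_finite_1} and \eqref{eq:energy_est_finite_2} on $[0,T]$ therefore hold with $c_u$, $c_u'$, $\|f\|_{L^2(Q)}$ and $\|f\|_{L^\infty(0,\infty;L^2(\Omega))}$ on the right-hand side. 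These right-hand sides are independent of $T$.

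\textbf{Step 3: passage to the limit.} For a.e.\ (indeed every) $t\ge0$, choose any $T>t$. The pointwise bound $\|\nabla y(t)\|^2+\|\dot y(t)\|^2\le c_u(\cdots)$ then holds, and similarly $\|\ddot y(t)\|_{H^{-1}}^2\le c_u'(\cdots)+3\|f\|^2_{L^\infty(0,\infty;L^2)}$ for a.e.\ $t$. Taking the essential supremum over $t\in(0,\infty)$ gives \eqref{eq:energy_est_infinite_1} and \eqref{eq:energy_est_infinite_2}. Equivalently, the norms $\sup_{[0,T]}$ are nondecreasing in $T$ and bounded, so their limits as $T\to\infty$ equal the $L^\infty(0,\infty;\cdot)$ norms and inherit the bound.

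\textbf{Expected obstacle.} The only delicate point is Step 1: the infinite-horizon solution must be well defined and independent of $T$. This rests on the uniqueness part of Lemma~\ref{lem:energy_estimates_finite}, which I take as given. The estimates themselves are routine once one observes that the Grönwall exponent involves only $\int_0^T\|u\|_{L^\infty}^2$. This quantity stays finite as $T\to\infty$ precisely because $u\in L^2(0,\infty;L^\infty(\Omega))$. Likewise the $\ddot y$ bound stays finite because $u\in L^\infty(Q)$.
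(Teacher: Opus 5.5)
Your proposal is correct and follows the paper's proof: apply Lemma~\ref{lem:energy_estimates_finite} on each $[0,T]$, bound $c_{u,T}$, $c_{u,T}'$ and the $f$-norms by their $T=\infty$ counterparts using monotonicity in $T$, and let $T\to\infty$; your Step~1 (patching the finite-horizon solutions together by uniqueness) only makes explicit what the paper leaves implicit. One small caveat: taking the supremum of the pointwise bound on $\|\nabla y(t)\|^2+\|\dot y(t)\|^2$ controls only the supremum of the sum, so \eqref{eq:energy_est_infinite_1} as stated comes from your second, monotone-limit formulation, which uses the lemma's sum-of-suprema form.
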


\begin{proof}
	From Lemma~\ref{lem:energy_estimates_finite}, for each $T>0$ we have
	\[
	\begin{aligned}
		&\|y\|_{C([0,T];H_0^1(\Omega))}^2 + \|\dot{y}\|_{C([0,T];L^2(\Omega))}^2 \\
		&\qquad \le 
		\exp\!\Big(c_\Omega\|u\|_{L^2(0,T;L^\infty(\Omega))}^2\Big)
		\Big(
		\|y_0\|_{H_0^1}^2 + \|y_1\|^2 + \|f\|_{L^2(Q_T)}^2
		\Big).
	\end{aligned}
	\]
	Since $\|u\|_{L^2(0,T;L^\infty(\Omega))}$ and $\|f\|_{L^2(Q_T)}$ are monotone in $T$, 
	we can pass to the limit $T\to\infty$. The right-hand side remains finite because 
	$u \in L^2(0,\infty;L^\infty(\Omega))$ and $f \in L^2(Q)$, yielding \eqref{eq:energy_est_infinite_1}. 
	Similarly, applying the same argument to estimate~\eqref{eq:energy_est_finite_2} yields~\eqref{eq:energy_est_infinite_2}.
\end{proof}

We close this section with a result establishing Lipschitz continuity of the solutions with respect to both controls and external forcing.
\begin{lemma}
	\label{lem:lipschitz}
	Assume the standing hypotheses hold. For $i=1,2$, let 
	\[
	u_i \in L^2(0,\infty;L^{\infty}(\Omega)) \cap L^{\infty}(Q), \quad
	f_i \in L^2(Q) \cap L^{\infty}(0,\infty;L^2(\Omega)),
	\] 
	and denote by $y_i$ the corresponding weak solutions with the same initial data $(y_0, y_1)$. Then there exists a constant $L>0$, depending on $\Omega$ and the norms of $y_i$ and $u_i$, such that
	\begin{equation}
		\label{eq:lipschitz}
		\begin{aligned}
			&\|y_1 - y_2\|_{L^{\infty}(0,\infty;H_0^1(\Omega))}^2
			+ \|\dot y_1 - \dot y_2\|_{L^{\infty}(0,\infty;L^2(\Omega))}^2
			+ \|\ddot y_1 - \ddot y_2\|_{L^{\infty}(0,\infty;H^{-1}(\Omega))}^2 \\
			&\le L\Big(
			\|u_1 - u_2\|_{L^{\infty}(Q) \cap L^2(0,\infty;L^{\infty}(\Omega))}^2
			+ \|f_1 - f_2\|_{L^{\infty}(0,\infty;L^2(\Omega)) \cap L^2(Q)}^2
			\Big).
		\end{aligned}
	\end{equation}
\end{lemma}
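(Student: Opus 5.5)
Set $w := y_1 - y_2$, $\delta u := u_1 - u_2$, $\delta f := f_1 - f_2$ (here $y_1,y_2$ denote the two states, not the initial velocity). Subtracting the weak formulations, $w$ has zero initial data and satisfies, for every $v\in H_0^1(\Omega)$,
\[
\langle \ddot w + \dot w, v\rangle_* + \langle \nabla w, \nabla v\rangle = \langle u_1 w + \delta u\, y_2 + \delta f, v\rangle .
\]
So $w$ solves the state equation \eqref{eq:intro_state} with control $u_1$, zero initial data, and forcing $g := \delta u\, y_2 + \delta f$. I will apply Theorem~\ref{lem:energy_estimates_infinite} to this problem. That requires $g \in L^2(Q)\cap L^\infty(0,\infty;L^2(\Omega))$, with norms bounded by the right-hand side of \eqref{eq:lipschitz}.

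The estimate for $g$ is the core step. In $L^2(Q)$, Hölder and Poincaré give
\[
\|\delta u\, y_2\|_{L^2(Q)}^2 \le \int_0^\infty \|\delta u(t)\|_{L^\infty}^2 \|y_2(t)\|^2\,dt \le c_\Omega \|y_2\|_{L^\infty(0,\infty;H_0^1)}^2 \|\delta u\|_{L^2(0,\infty;L^\infty(\Omega))}^2 .
\]
In $L^\infty(0,\infty;L^2(\Omega))$ one has
\[
\|\delta u\, y_2\|_{L^\infty(0,\infty;L^2)} \le \sqrt{c_\Omega}\, \|\delta u\|_{L^\infty(Q)} \|y_2\|_{L^\infty(0,\infty;H_0^1)} .
\]
In both bounds, $\|y_2\|_{L^\infty(0,\infty;H_0^1)}$ is finite and controlled by \eqref{eq:energy_est_infinite_1} applied to $(u_2,f_2)$. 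This is exactly where both components of the intersection norm on the control are used: the $L^2$-in-time part for the Grönwall-type estimate, and the $L^\infty$ part for the pointwise $\ddot w$ bound. Then $(a+b)^2\le 2a^2+2b^2$ handles the sum with $\delta f$.

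Now apply \eqref{eq:energy_est_infinite_1} and \eqref{eq:energy_est_infinite_2} with control $u_1$, zero initial data and forcing $g$:
\[
\|w\|_{L^\infty(H_0^1)}^2 + \|\dot w\|_{L^\infty(L^2)}^2 + \|\ddot w\|_{L^\infty(H^{-1})}^2 \le (c_{u_1} + c_{u_1}')\|g\|_{L^2(Q)}^2 + 3\|g\|_{L^\infty(0,\infty;L^2)}^2 .
\]
Inserting the bounds for $g$ gives \eqref{eq:lipschitz} with
\[
L = 2\max\{c_{u_1}+c'_{u_1},\,3\}\,\bigl(1 + c_\Omega \|y_2\|_{L^\infty(0,\infty;H_0^1)}^2\bigr).
\]
This constant depends only on $\Omega$, on $\|u_1\|$ in $L^2(0,\infty;L^\infty(\Omega))\cap L^\infty(Q)$, and on the norm of $y_2$, as claimed.

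The main obstacle is justifying that Theorem~\ref{lem:energy_estimates_infinite} applies to the $w$-problem. That theorem is stated for the given $f$, but its proof, via Lemma~\ref{lem:energy_estimates_finite} and monotone passage $T\to\infty$, only uses $f\in L^2(Q)\cap L^\infty(0,\infty;L^2(\Omega))$ and arbitrary initial data in $H_0^1\times L^2$. So it applies verbatim to $g$ once $g$ is shown to lie in that space, which the estimates above provide. One must also check that $w$ is indeed the unique weak solution of the $w$-problem, so that the a priori bounds hold for it. This follows from the uniqueness part of Lemma~\ref{lem:energy_estimates_finite} on every $[0,T]$.
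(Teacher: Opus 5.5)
Your proposal is correct and follows the paper's route. Like the paper, you subtract the two state equations to get a linear problem for $y_1-y_2$, with control $u_1$, zero initial data and forcing $\delta u\,y_2+\delta f$, and then invoke Theorem~\ref{lem:energy_estimates_infinite}. The paper leaves the ``standard norm inequalities'' implicit; your bounds on $\delta u\,y_2$ in $L^2(Q)$ (via the $L^2(0,\infty;L^\infty(\Omega))$ part of the control norm) and in $L^\infty(0,\infty;L^2(\Omega))$ (via the $L^\infty(Q)$ part) supply exactly those details. Your explicit $L$ is right up to a harmless constant factor that depends on how the intersection norm is normalized.
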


\begin{proof}
	Let $\delta y = y_1 - y_2$, $\delta u = u_1 - u_2$, and $\delta f = f_1 - f_2$.  
	Then $\delta y$ satisfies the linear system
	\[
	\delta \ddot y + \delta \dot y - \Delta \delta y = u_1 \delta y + \delta u \, y_2 + \delta f, 
	\quad \delta y(0)=\delta \dot y(0)=0.
	\]
	
	The desired Lipschitz estimate~\eqref{eq:lipschitz} follows directly from the energy estimate for this linear system (Lemma~\ref{lem:energy_estimates_infinite}) together with standard norm inequalities.
\end{proof}

\begin{remark}
	If $u_1, u_2 \in \mathcal{U}_{ad}$, then the constant $L$ in \eqref{eq:lipschitz} can be chosen independently of $u_1, u_2, y_1$, and $y_2$. 
	In this case, $L$ depends only on the domain $\Omega$, the initial data $(y_0,y_1)$, and the control bounds $\alpha, \beta$.
\end{remark}

\section{Differentiability analysis}
\label{sec:differentiability}
This part establishes the ingredients required for the derivation of optimality conditions. 
We introduce the adjoint equation associated with the bilinear wave dynamics, analyze the Fréchet differentiability of the control-to-state mapping, and conclude with the differentiability properties of the cost functional.

\subsection{The adjoint equation}

In what follows, we denote the control space by
\[
\mathcal{U} := L^2(0,\infty;L^\infty(\Omega)) \cap L^\infty(Q),
\]
and the state space by
\[
\mathcal{Y} := L^\infty(0,\infty;H_0^1(\Omega))
\cap W^{1,\infty}(0,\infty;L^2(\Omega))
\cap W^{2,\infty}(0,\infty;H^{-1}(\Omega)).
\]

The next lemma establishes the well-posedness of the adjoint equation, which will be instrumental in characterizing the first and second derivatives of the cost functional.
\begin{lemma}
	\label{lem:adjoint_wellposed}
	For any control $u \in \mathcal{U}$, let $y_u$ denote the corresponding state solution of \eqref{eq:intro_state}. Then there exists a unique adjoint state $\phi \in \mathcal{Y}$ satisfying
	\begin{equation}
		\label{eq:adjoint_phi_system}
		\begin{cases}
			\ddot{\phi} - \dot{\phi} = \Delta \phi + u \phi + y_u - y_d, & \text{in } Q, \\
			\phi = 0, & \text{on } \partial \Omega, \\
			\|\phi(t)\| + \|\dot{\phi}(t)\|_{H^{-1}(\Omega)} \to 0, & \text{as } t \to \infty.
		\end{cases}
	\end{equation}
	Furthermore, $\phi$ satisfies the energy estimates
	\begin{align*}
		\|\phi\|_{L^\infty(0,\infty;H_0^1(\Omega))}^2 + \|\dot{\phi}\|_{L^\infty(0,\infty;L^2(\Omega))}^2 &\le c_u \|y_u - y_d\|_{L^2(Q)}^2, \\
		\|\ddot{\phi}\|_{L^\infty(0,\infty;H^{-1}(\Omega))}^2 &\le
		c_u' \|y_u - y_d\|_{L^2(Q)}^2 + 3\|y_u - y_d\|_{L^\infty(0,\infty;L^2(\Omega))}^2,
	\end{align*}
	where $c_u$ and $c_u'$ are the constants from Lemma \ref{lem:energy_estimates_infinite}.
\end{lemma}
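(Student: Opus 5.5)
The plan follows the strategy announced in the introduction: solve truncated backward problems on $[0,T]$, reverse time to obtain forward damped wave equations, apply Lemma~\ref{lem:energy_estimates_finite} with horizon-independent constants, and let $T\to\infty$.

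\textbf{Truncated problems.} Fix $T>0$ and let $\phi_T$ solve the adjoint equation on $(0,T)$ with terminal data $\phi_T(T)=0$, $\dot\phi_T(T)=0$. With $s=T-t$ and $\psi_T(s):=\phi_T(T-s)$ we get $\dot\phi_T(t)=-\psi_T'(s)$ and $\ddot\phi_T(t)=\psi_T''(s)$. The equation $\ddot\phi-\dot\phi=\Delta\phi+u\phi+y_u-y_d$ therefore becomes
\[
\psi_T''+\psi_T'=\Delta\psi_T+\tilde u_T\psi_T+\tilde g_T,\qquad \psi_T(0)=0,\ \psi_T'(0)=0,
\]
where $\tilde u_T(s)=u(T-s)$ and $\tilde g_T(s)=(y_u-y_d)(T-s)$. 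This is exactly \eqref{eq:intro_state} on $[0,T]$ with $f=\tilde g_T$.

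\textbf{Horizon-independent bounds.} By Theorem~\ref{lem:energy_estimates_infinite}, $y_u\in L^\infty(0,\infty;L^2)$. Together with the standing assumption on $y_d$, this gives $y_u-y_d\in L^\infty(0,\infty;L^2)$. Finiteness of $\|y_u-y_d\|_{L^2(Q)}$ requires $y_u\in L^2(Q)$, which the energy estimate alone does not supply. I expect this to be the main obstacle. I would first try to obtain it from a decay estimate exploiting the damping $\dot y$. If that fails, I would note that the statement's right-hand side is vacuous unless this norm is finite, so $J(u)<\infty$ can be assumed, which gives $y_u-y_d\in L^2(Q)$.

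Given this, Lemma~\ref{lem:energy_estimates_finite} applied to $\psi_T$ uses the norms of $\tilde u_T$ and $\tilde g_T$ over $(0,T)$. These are the norms of $u$ and $y_u-y_d$ over $(0,T)$, which are bounded by the full norms on $(0,\infty)$. Translating back to $t$, we get for every $T$
\[
\|\phi_T\|_{C([0,T];H_0^1)}^2+\|\dot\phi_T\|_{C([0,T];L^2)}^2\le c_u\|y_u-y_d\|_{L^2(Q)}^2,
\]
together with the analogous bound on $\ddot\phi_T$ with constant $c_u'$ and the additional $3\|y_u-y_d\|^2_{L^\infty(0,\infty;L^2)}$ term. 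Since the initial data vanish, these are exactly the claimed constants.

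\textbf{Passage to the limit.} Extend $\phi_T$ by zero for $t>T$; this extension is $C^1$ with values in $H^{-1}$ in the relevant sense. Apply the weak-* compactness of bounded sets in $L^\infty(0,\infty;H_0^1)$, $W^{1,\infty}(0,\infty;L^2)$ and $W^{2,\infty}(0,\infty;H^{-1})$. Along a subsequence $T_k\to\infty$ we obtain a weak-* limit $\phi\in\mathcal Y$. Since the equation is linear in $\phi$, it can be passed to the limit in the weak formulation tested with $v\,\eta(t)$, where $\eta\in C_c^\infty(0,\infty)$. The estimates survive by weak-* lower semicontinuity.

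\textbf{Decay at infinity.} Apply the energy estimate backward from $T$ to an arbitrary $t<T$, i.e.\ Lemma~\ref{lem:energy_estimates_finite} on $[0,T-t]$ for $\psi_T$. Only the tails of the data enter:
\[
\|\phi_T(t)\|_{H_0^1}^2+\|\dot\phi_T(t)\|^2\le \exp\bigl(c_\Omega\|u\|^2_{L^2(t,\infty;L^\infty)}\bigr)\,\|y_u-y_d\|^2_{L^2((t,\infty)\times\Omega)}.
\]
This bound is uniform in $T$, passes to $\phi$ by lower semicontinuity, and tends to $0$ as $t\to\infty$. This gives the stronger decay $\|\phi(t)\|_{H_0^1}+\|\dot\phi(t)\|\to0$, which implies the stated one.

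\textbf{Uniqueness.} Take the difference $w$ of two solutions. It solves the homogeneous adjoint equation with vanishing energy at infinity. Applying the same tail energy argument on $[t,S]$ with $S\to\infty$, and using Grönwall backward with the integrable weight $\|u\|_{L^\infty}^2$, gives $E_w(t)\le C\,E_w(S)\to0$. Hence $w\equiv0$. Making this rigorous requires the energy identity for $w$, which holds since $w\in\mathcal Y$, the same regularity used in Lemma~\ref{lem:energy_estimates_finite}. Uniqueness also shows that the whole family $\phi_T$ converges, not just a subsequence.
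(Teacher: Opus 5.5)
Your existence argument follows the paper's own route: truncate, reverse time via $s=T-t$, use the horizon-independent constants of Lemma~\ref{lem:energy_estimates_finite}, and pass to a weak-$*$ limit. You carry it out in more detail than the paper's sketch, and your tail estimate is a clean way to get the decay.

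The genuine gap is in the uniqueness step. You use that $w=\phi_1-\phi_2$ has vanishing \emph{energy} at infinity, $\|\nabla w(S)\|+\|\dot w(S)\|\to0$. The class in the statement only prescribes $\|w(S)\|+\|\dot w(S)\|_{H^{-1}(\Omega)}\to0$ together with boundedness in $\mathcal{Y}$, and this does not force $E_w(S)\to0$. As written, you only prove uniqueness among solutions with the stronger decay your construction happens to produce. The repair is to run the same backward Gr\"onwall argument one level lower, on $\tilde E_w:=\tfrac12\|w\|^2+\tfrac12\|\dot w\|_{H^{-1}}^2$ with $\|v\|_{H^{-1}}:=\|(-\Delta)^{-1/2}v\|$. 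From $\ddot w=\dot w+\Delta w+uw$ and $\|(-\Delta)^{-1}v\|\le\sqrt{c_\Omega}\,\|v\|_{H^{-1}}$ you get
\[
\frac{d}{dt}\tilde E_w=\|\dot w\|_{H^{-1}}^2+\bigl\langle(-\Delta)^{-1}\dot w,\,u w\bigr\rangle\ \ge\ -\frac{c_\Omega}{4}\|u(t)\|_{L^\infty(\Omega)}^2\|w\|^2\ \ge\ -\frac{c_\Omega}{2}\|u(t)\|_{L^\infty(\Omega)}^2\,\tilde E_w .
\]
Hence $\tilde E_w(t)\le\exp\bigl(\tfrac{c_\Omega}{2}\|u\|^2_{L^2(t,S;L^\infty(\Omega))}\bigr)\tilde E_w(S)\to0$ as $S\to\infty$, i.e.\ $w\equiv0$. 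This needs only $w\in W^{1,\infty}(0,\infty;L^2(\Omega))\cap W^{2,\infty}(0,\infty;H^{-1}(\Omega))$, so it is also easier to justify than the $H_0^1\times L^2$ energy identity.

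The obstacle you flagged, $y_u-y_d\in L^2(Q)$, is real, and the paper passes over it silently. Your first route (a decay estimate using the damping) closes it. Commit to it and drop the fallback: assuming $J(u)<\infty$ gives neither existence nor decay of $\phi$ for an arbitrary $u\in\mathcal{U}$. Put $g:=uy_u+f$. Since $\|u(t)y_u(t)\|\le\|u(t)\|_{L^\infty(\Omega)}\|y_u\|_{L^\infty(0,\infty;L^2(\Omega))}$ and $u\in L^2(0,\infty;L^\infty(\Omega))$, we get $g\in L^2(Q)$. Integrating the energy identity gives $\int_0^\infty\|\dot y_u\|^2\,\mathrm{d}t\le 2E(0)+\|g\|_{L^2(Q)}^2$. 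Testing the state equation with $y_u$ gives
\[
\int_0^T\|\nabla y_u\|^2\,\mathrm{d}t=\int_0^T\|\dot y_u\|^2\,\mathrm{d}t+\int_0^T\langle g,y_u\rangle\,\mathrm{d}t-\Bigl[\langle\dot y_u,y_u\rangle+\tfrac12\|y_u\|^2\Bigr]_0^T .
\]
The bracket is bounded by Theorem~\ref{lem:energy_estimates_infinite}, and the middle term is absorbed by Poincar\'e and Young. Hence $y_u\in L^2(0,\infty;H_0^1(\Omega))$, so $y_u-y_d\in L^2(Q)$, and the tails $\|y_u-y_d\|_{L^2((t,\infty)\times\Omega)}$ used in your decay step tend to $0$.

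One smaller point: transferring the tail bound from $\phi_{T_k}(t)$ to $\phi(t)$ needs convergence for each fixed $t$. This is available from Aubin--Lions/Arzel\`a--Ascoli in $C([0,S];L^2)$ and $C([0,S];H^{-1})$. Alternatively, apply weak-$*$ lower semicontinuity on $(t_0,\infty)$, where the tail bound is nonincreasing in $t_0$.
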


\begin{proof}
	Existence and uniqueness follow by truncating the adjoint equation to a finite horizon, performing the change of variable $s = T - t$ to transform it into a forward problem, applying uniform energy estimates from Lemma \ref{lem:energy_estimates_infinite}, and passing to the limit as $T \to \infty$. A similar strategy was used in \cite[Lemma~3]{Reference14}.
\end{proof}

\subsection{Differentiability of the control-to-state mapping}

Consider the control-to-state operator 
\[
S:\mathcal{U}\to\mathcal{Y},\qquad S(u)=y_u ,
\]
where $y_u$ denotes the unique solution of \eqref{eq:intro_state}. The following result establishes the differentiability properties of this operator.
\begin{theorem}	\label{thm:differentiability}
	Assume the standing hypotheses hold and let $f\in L^2(Q)\cap L^\infty(0,\infty;L^2(\Omega))$ be fixed. Then  
	\[
	S:\mathcal{U}\to\mathcal{Y}
	\]
	is twice continuously Fréchet differentiable. The derivatives are characterized as follows:
	
	\begin{enumerate}
		\item For $u,h\in\mathcal{U}$, the first derivative $z=S'(u)[h]\in\mathcal{Y}$ is the solution of
		\begin{equation}\label{eq:linearized}
			\begin{cases}
				\ddot{z}+\dot{z}=\Delta z+u z+h\,y_u, &\text{in }Q,\\ 
				z=0, &\text{on }\Sigma,\\ 
				z(0)=0,\quad \dot{z}(0)=0, &\text{in }\Omega.
			\end{cases}
		\end{equation}
		
		\item For $u,h_1,h_2\in\mathcal{U}$, the second derivative  
		$\zeta=S''(u)[h_1,h_2]\in\mathcal{Y}$ solves
		\begin{equation}\label{eq:second_linearized}
			\begin{cases}
				\ddot{\zeta}+\dot{\zeta}=\Delta\zeta+u\zeta+h_1 z_2+h_2 z_1, &\text{in }Q,\\ 
				\zeta=0, &\text{on }\Sigma,\\ 
				\zeta(0)=0,\quad \dot{\zeta}(0)=0, &\text{in }\Omega,
			\end{cases}
		\end{equation}
		where $z_i=S'(u)[h_i]$ for $i=1,2$.
	\end{enumerate}
\end{theorem}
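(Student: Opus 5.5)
We will apply the implicit function theorem, as announced in the introduction. Set
\[
\mathcal{Y}_0 := \{ y\in\mathcal{Y} \,:\, y(0)=0,\ \dot y(0)=0\},
\qquad
\mathcal{Z} := L^2(Q)\cap L^\infty(0,\infty;L^2(\Omega)).
\]
We write the solution as $y=\bar y+w$ with $w\in\mathcal{Y}_0$, where $\bar y\in\mathcal{Y}$ is the fixed solution of the equation with $u=0$ (given by Theorem~\ref{lem:energy_estimates_infinite}). Define $F:\mathcal{U}\times\mathcal{Y}_0\to\mathcal{Z}$ by
\[
F(u,w) := (\ddot{\bar y}+\ddot w)+(\dot{\bar y}+\dot w)-\Delta(\bar y+w)-u(\bar y+w)-f .
\]
For the target space we take the image of the wave operator $\ddot w+\dot w-\Delta w$ on $\mathcal{Y}_0$, normed by the graph norm (equivalently, $F$ takes values in $\mathcal{Z}$ after adding back the linear part), so that $S(u)=\bar y+w(u)$ with $F(u,w(u))=0$.

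The linear part of $F$ is bounded. The only nonlinearity is the bilinear map $(u,w)\mapsto u(\bar y+w)$, which is continuous from $\mathcal{U}\times\mathcal{Y}$ into $\mathcal{Z}$ because
\[
\|uy\|_{L^2(Q)}\le \|u\|_{L^2(0,\infty;L^\infty)}\|y\|_{L^\infty(0,\infty;L^2)},
\qquad
\|uy\|_{L^\infty(L^2)}\le \|u\|_{L^\infty(Q)}\|y\|_{L^\infty(L^2)} .
\]
This pair of estimates is exactly where the intersection space $\mathcal{U}$ is needed. Since $F$ is a continuous affine-plus-bilinear map, it is $C^\infty$. Its partial derivatives are
\[
\partial_w F(u,w)[v]=\ddot v+\dot v-\Delta v-uv,
\qquad
\partial_u F(u,w)[h]=-h(\bar y+w),
\]
and $\partial_{uu}F=0$, $\partial_{ww}F=0$, $\partial_{uw}F[h,v]=-hv$.

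\textbf{Main step: $\partial_wF(u,w_u)$ is an isomorphism.} For any $g\in\mathcal{Z}$, the problem
\[
\ddot v+\dot v=\Delta v+uv+g,\qquad v(0)=\dot v(0)=0,
\]
is uniquely solvable with $v\in\mathcal{Y}_0$. This follows from Lemma~\ref{lem:energy_estimates_finite} and Theorem~\ref{lem:energy_estimates_infinite}, applied with $f=g$ and zero initial data, which give
\[
\|v\|_{\mathcal{Y}}^2\le (c_u+c_u')\|g\|_{L^2(Q)}^2+3\|g\|_{L^\infty(L^2)}^2 .
\]
Thus the inverse is bounded uniformly in the horizon. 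Uniqueness holds because the difference of two solutions solves the same problem with $g=0$, and the estimate then forces it to vanish. I expect the obstacle here to be functional-analytic rather than computational. The range of $\partial_wF$ is not all of $\mathcal{Z}$ but a graph-normed space, so the codomain has to be set up carefully for the operator to be a bijection onto a Banach space. To make this work, I would define the codomain as $\mathcal{Z}$ and the domain as $\{v\in\mathcal{Y}_0 : \ddot v+\dot v-\Delta v\in\mathcal{Z}\}$ with the graph norm. This domain is complete, and it contains all solutions, since $uy\in\mathcal{Z}$ whenever $y\in\mathcal{Y}$.

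The implicit function theorem then gives a $C^2$ (indeed $C^\infty$) map $u\mapsto w(u)$, and hence $S\in C^2(\mathcal{U},\mathcal{Y})$. Differentiating $F(u,w(u))=0$ in the direction $h$ gives
\[
\ddot z+\dot z-\Delta z-uz-hy_u=0,
\]
with zero initial data, which is \eqref{eq:linearized}. Differentiating a second time and using the bilinear structure (only the mixed derivative $\partial_{uw}F$ is nonzero) gives
\[
\partial_wF[\zeta]-h_1z_2-h_2z_1=0,
\]
which is \eqref{eq:second_linearized}. Both systems are uniquely solvable in $\mathcal{Y}$ by the same energy estimate, since $hy_u\in\mathcal{Z}$ and $h_iz_j\in\mathcal{Z}$ by the Hölder bounds above.
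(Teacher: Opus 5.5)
Your proposal is correct and follows the paper's route: write the state equation as $F(u,y)=0$, invert $\partial_y F(u,y_u)$ using the horizon-uniform energy estimates of Theorem~\ref{lem:energy_estimates_infinite}, apply the implicit function theorem, and differentiate the resulting identity once and twice to obtain \eqref{eq:linearized} and \eqref{eq:second_linearized}. Your shift $y=\bar y+w$ onto zero initial data, together with the graph-normed domain $\{v\in\mathcal{Y}_0:\ddot v+\dot v-\Delta v\in L^2(Q)\cap L^\infty(0,\infty;L^2(\Omega))\}$, repairs two points the paper glosses over: for $y\in\mathcal{Y}$ the term $\ddot y+\dot y-\Delta y$ lies only in $L^\infty(0,\infty;H^{-1}(\Omega))$, and $\partial_yF$ cannot be injective on $\mathcal{Y}$ unless the initial conditions are built in. Replace your first description of the target space, which is muddled, with the final one.
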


\begin{proof}
	Define the operator $F : \mathcal{U} \times \mathcal{Y} \to \mathcal{F}$ by
	\[
	F(u, y) := \ddot{y} + \dot{y} - \Delta y - u y - f,
	\]
	where $\mathcal{F} := L^2(Q) \cap L^{\infty}(0,\infty;L^2(\Omega))$. The state equation \eqref{eq:intro_state} is equivalent to $F(u, y) = 0$.
	
	The operator $F$ is of class $C^2$ with partial derivatives
	\[
	\partial_y F(u, y)[\psi] = \ddot{\psi} + \dot{\psi} - \Delta \psi - u \psi, \quad 
	\partial_u F(u, y)[h] = -h y,
	\]
	\[
	\partial_{yy}^2 F(u, y)[\psi_1, \psi_2] = 0, \quad 
	\partial_{uy}^2 F(u, y)[h, \psi] = -h \psi, \quad
	\partial_{uu}^2 F(u, y)[h_1, h_2] = 0.
	\]
	By Lemma~\ref{lem:energy_estimates_infinite}, the operator $\partial_y F(u, y_u) : \mathcal{Y} \to \mathcal{F}$ is bijective with bounded inverse. The implicit function theorem (see \cite{troeltzsch2010optimal, aronna2021first}) guarantees that $S$ is of class $C^2$.
	
	The first derivative is obtained by differentiating $F(u, S(u)) = 0$:
	\[
	\partial_y F(u, y_u)[S'(u)[h]] + \partial_u F(u, y_u)[h] = 0,
	\]
	which gives $z = S'(u)[h]$ solving \eqref{eq:linearized}.
	
	The second derivative follows by differentiating once more:
	\[
	\partial_y F(u, y_u)[S''(u)[h_1, h_2]] = - \partial_{uy}^2 F(u, y_u)[h_1, S'(u)[h_2]] - \partial_{uy}^2 F(u, y_u)[h_2, S'(u)[h_1]],
	\]
	which gives $\zeta = S''(u)[h_1, h_2]$ solving \eqref{eq:second_linearized}.
\end{proof}

\begin{remark}
	The proof follows the approach of \cite[Corollary 4.2]{aronna2021first}; the linearized equations \eqref{eq:linearized}--\eqref{eq:second_linearized} differ slightly, but the differentiability results remain valid, which justifies the derivatives above.
\end{remark}

\subsection{Differentiability of the cost functional}

Using the differentiability properties of the control-to-state mapping established in Theorem~\ref{thm:differentiability}, we now characterize the derivatives of the cost functional in terms of the adjoint state.

\begin{proposition}
	\label{prop:cost_differentiability}
	The cost functional $J : \mathcal{U} \to \mathbb{R}$ defined by \eqref{eq:intro_cost} is twice continuously Fréchet differentiable. Moreover, for $u \in \mathcal{U}$, let $y_u = S(u)$ and $\phi_u \in \mathcal{Y}$ denote the adjoint state solving \eqref{eq:adjoint_phi_system}. Then, for $h_1, h_2 \in \mathcal{U}$, the following hold:
	
	\begin{enumerate}
		\item The first derivative is
		\begin{equation}
			\label{eq:cost_first_derivative}
			J'(u)[h] = \int_0^\infty \int_\Omega (\phi_u y_u + \gamma u)\, h \, \mathrm{d}x \, \mathrm{d}t.
		\end{equation}
		
		\item The second derivative is
		\begin{equation}
			\label{eq:cost_second_derivative}
			J''(u)[h_1, h_2] = \int_0^\infty \int_\Omega \Big( z_1 z_2 + \phi_u (h_1 z_2 + h_2 z_1) + \gamma h_1 h_2 \Big)\, \mathrm{d}x \, \mathrm{d}t,
		\end{equation}
		where $z_i = S'(u)[h_i]$ for $i = 1,2$, and each $z_i$ solves the linearized state equation \eqref{eq:linearized}.
	\end{enumerate}
\end{proposition}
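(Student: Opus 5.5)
We write $J$ as the composition $J(u)=G(S(u),u)$ with
\[
G(y,u):=\tfrac12\|y-y_d\|_{L^2(Q)}^2+\tfrac{\gamma}{2}\|u\|_{L^2(Q)}^2 .
\]
The proof then follows the chain rule, with the adjoint state used to remove $z$ from the first derivative.

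First, $G$ is a continuous quadratic functional on $L^2(Q)\times L^2(Q)$, hence $C^\infty$. The control space $\mathcal U$ embeds continuously into $L^2(Q)$, because $\|u\|_{L^2(Q)}^2\le|\Omega|\,\|u\|_{L^2(0,\infty;L^\infty(\Omega))}^2$. We also need $\mathcal Y$ to embed continuously into $L^2(Q)$. This does not follow from the $L^\infty$-in-time norms, so I would instead use that $S$ maps into states with $y_u-y_d\in L^2(Q)$, which is implicitly used in Lemma~\ref{lem:adjoint_wellposed}. A cleaner route is to work with the $L^2$-in-time energy bounds: integrating the energy identity gives $\int_0^\infty\|\dot y\|^2<\infty$, and a multiplier argument with $y$ gives $\int_0^\infty\|\nabla y\|^2$ bounded by data. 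The same bounds hold for $z$ and $\zeta$, since they solve the linear problem of Lemma~\ref{lem:energy_estimates_infinite} with right-hand sides $h y_u$ and $h_1z_2+h_2z_1$ in $\mathcal F$. Combining this with Theorem~\ref{thm:differentiability}, the chain rule yields $J\in C^2$, together with
\[
J'(u)[h]=\int_Q (y_u-y_d)z+\gamma uh,
\]
\[
J''(u)[h_1,h_2]=\int_Q z_1z_2+(y_u-y_d)\zeta+\gamma h_1h_2 ,
\]
where $z=S'(u)[h]$ and $\zeta=S''(u)[h_1,h_2]$.

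Second, we eliminate $z$ and $\zeta$ through the adjoint. On $(0,T)$, test the adjoint equation \eqref{eq:adjoint_phi_system} with $z$ and integrate by parts twice in time and once in space, using that $\phi$ and $z$ vanish on $\Sigma$:
\[
\int_0^T\!\langle y_u-y_d,z\rangle\,dt=\int_0^T\!\langle\ddot z+\dot z-\Delta z-uz,\phi\rangle_*\,dt+\Big[\langle\dot\phi,z\rangle_*-\langle\phi,\dot z\rangle-\langle\phi,z\rangle\Big]_0^T .
\]
The first-order term $-\dot\phi$ in the adjoint equation pairs with $+\dot z$ after integration by parts, which produces the term $-\langle\phi,z\rangle$ in the boundary bracket. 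The contribution at $t=0$ vanishes because $z(0)=\dot z(0)=0$. By the linearized equation \eqref{eq:linearized}, the integrand equals $\langle hy_u,\phi\rangle$.

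Third, we pass to $T\to\infty$. The boundary terms at $T$ are controlled by $\|\dot\phi(T)\|_{H^{-1}}\|z(T)\|_{H^1_0}+\|\phi(T)\|(\|\dot z(T)\|+\|z(T)\|)$. Since $z\in\mathcal Y$ is bounded in time and $\|\phi(T)\|+\|\dot\phi(T)\|_{H^{-1}}\to0$ by Lemma~\ref{lem:adjoint_wellposed}, these terms tend to zero. The left-hand side converges because $y_u-y_d$ and $z$ lie in $L^2(Q)$. This gives \eqref{eq:cost_first_derivative}. Repeating the identity with $\zeta$ in place of $z$, and using \eqref{eq:second_linearized}, replaces $\int_Q(y_u-y_d)\zeta$ by $\int_Q\phi_u(h_1z_2+h_2z_1)$, which gives \eqref{eq:cost_second_derivative}. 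Convergence of the integral $\int_0^T\langle h y_u,\phi\rangle\,dt$ follows from $h\in L^2(0,\infty;L^\infty(\Omega))$ together with $y_u,\phi\in L^\infty(0,\infty;L^2(\Omega))$.

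The main obstacle is justifying the time integration by parts at the regularity level $C([0,T];H^1_0)\cap C^1([0,T];L^2)$ with $\ddot{}\in H^{-1}$. I would handle it by Galerkin or mollification-in-time approximation, passing to the limit in the finite-$T$ identity. The second delicate point is the $L^2(Q)$ integrability of $z$, $\zeta$ and $y_u$ needed for the integrals over $Q$; I would secure it through the integrated energy (dissipation) bound described in the first step.
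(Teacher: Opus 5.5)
Your proposal is correct and has the same architecture as the paper's proof. The chain rule gives $J'(u)[h]=\int_Q (y_u-y_d)z+\gamma uh$ and $J''(u)[h_1,h_2]=\int_Q z_1z_2+(y_u-y_d)\zeta+\gamma h_1h_2$. Then $z$ and $\zeta$ are eliminated by integrating by parts on $(0,T)$ and letting $T\to\infty$, using the decay of $\phi_u$ and the uniform $H_0^1\times L^2$ bound on $(z,\dot z)$. You test the adjoint equation with $z$ while the paper tests the linearized equation with $\phi_u$, but the boundary bracket is identical.

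The real difference is your integrated dissipation estimate, which puts $y_u$, $z$, $\zeta$ in $L^2(0,\infty;H_0^1(\Omega))$. The paper does without it: it simply asserts that $y\mapsto\frac12\|y-y_d\|_{L^2(Q)}^2$ is $C^2$ on $\mathcal{Y}$. But $\mathcal{Y}$ carries only $L^\infty$-in-time norms and does not embed in $L^2(Q)$; time-independent $w\in H_0^1(\Omega)$ already lies in $\mathcal{Y}$. So your version is the one that actually justifies the chain rule. The cleanest way to do this is to run the implicit function theorem in $\mathcal{Y}\cap L^2(0,\infty;H_0^1(\Omega))$, where your estimate makes the inverse of $\partial_yF(u,y_u)$ bounded. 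Your estimate also justifies the $T\to\infty$ limit of $\int_0^T\langle y_u-y_d,z\rangle\,dt$.

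One justification needs correcting. From $h\in L^2(0,\infty;L^\infty(\Omega))$ and $y_u,\phi_u\in L^\infty(0,\infty;L^2(\Omega))$ you only get $t\mapsto\|h(t)\|_{L^\infty}\|y_u(t)\|\,\|\phi_u(t)\|$ in $L^2(0,\infty)$, not in $L^1(0,\infty)$. Absolute integrability of $\phi_u y_u h$, and likewise of $\phi_u h_1 z_2$, should instead be deduced from $y_u,z_2\in L^2(Q)$, which your own first step provides.
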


\begin{proof}
	\textbf{Twice continuous differentiability.} 
	Since $S : \mathcal{U} \to \mathcal{Y}$ is twice continuously Fréchet differentiable by Theorem~\ref{thm:differentiability}, and the mapping $y \mapsto \frac{1}{2}\|y - y_d\|_{L^2(Q)}^2$ is twice continuously Fréchet differentiable on $\mathcal{Y}$, the chain rule for Fréchet derivatives yields the twice continuous differentiability of $J$.
	
	\textbf{First derivative.} 
	By direct differentiation of the cost functional, for $h \in \mathcal{U}$, we have
	\begin{equation}
		\label{eq:cost_derivative_initial}
		J'(u)[h] = \int_0^\infty \int_\Omega (y_u - y_d) z \, \mathrm{d}x \, \mathrm{d}t + \gamma \int_0^\infty \int_\Omega u h \, \mathrm{d}x \, \mathrm{d}t,
	\end{equation}
	where $z = S'(u)[h]$ satisfies the linearized state equation \eqref{eq:linearized}:
	\[
	\ddot{z} + \dot{z} = \Delta z + u z + h y_u, \quad z(0) = \dot{z}(0) = 0.
	\]
	
	We now express the first integral in terms of the adjoint state. Write the integral as a limit:
	\[
	\int_0^\infty \int_\Omega (y_u - y_d) z \, \mathrm{d}x \, \mathrm{d}t = \lim_{T \to \infty} \int_0^T \int_\Omega (y_u - y_d) z \, \mathrm{d}x \, \mathrm{d}t.
	\]
	
	Multiply the linearized state equation by the adjoint state $\phi_u$ and integrate over $(0,T) \times \Omega$:
	\[
	\int_0^T \int_\Omega (\ddot{z} + \dot{z}) \phi_u \, \mathrm{d}x \, \mathrm{d}t = \int_0^T \int_\Omega (\Delta z + u z + h y_u) \phi_u \, \mathrm{d}x \, \mathrm{d}t.
	\]
	
	For the left-hand side, integrate by parts in time:
	\begin{align*}
		\int_0^T \int_\Omega \ddot{z} \phi_u \, \mathrm{d}x \, \mathrm{d}t 
		&= \left[\int_\Omega \dot{z} \phi_u \, \mathrm{d}x\right]_0^T - \int_0^T \int_\Omega \dot{z} \dot{\phi}_u \, \mathrm{d}x \, \mathrm{d}t \\
		&= \int_\Omega \dot{z}(T) \phi_u(T) \, \mathrm{d}x - \int_0^T \int_\Omega \dot{z} \dot{\phi}_u \, \mathrm{d}x \, \mathrm{d}t,
	\end{align*}
	where we used $z(0) = 0$. Similarly,
	\begin{align*}
		\int_0^T \int_\Omega \dot{z} \dot{\phi}_u \, \mathrm{d}x \, \mathrm{d}t 
		&= \left[\int_\Omega z \dot{\phi}_u \, \mathrm{d}x\right]_0^T - \int_0^T \int_\Omega z \ddot{\phi}_u \, \mathrm{d}x \, \mathrm{d}t \\
		&= \int_\Omega z(T) \dot{\phi}_u(T) \, \mathrm{d}x - \int_0^T \int_\Omega z \ddot{\phi}_u \, \mathrm{d}x \, \mathrm{d}t.
	\end{align*}
	
	Therefore,
	\[
	\int_0^T \int_\Omega \ddot{z} \phi_u \, \mathrm{d}x \, \mathrm{d}t = \int_\Omega \dot{z}(T) \phi_u(T) \, \mathrm{d}x - \int_\Omega z(T) \dot{\phi}_u(T) \, \mathrm{d}x + \int_0^T \int_\Omega z \ddot{\phi}_u \, \mathrm{d}x \, \mathrm{d}t.
	\]
	
	For the first-order time derivative term:
	\[
	\int_0^T \int_\Omega \dot{z} \phi_u \, \mathrm{d}x \, \mathrm{d}t = \int_\Omega z(T) \phi_u(T) \, \mathrm{d}x - \int_0^T \int_\Omega z \dot{\phi}_u \, \mathrm{d}x \, \mathrm{d}t.
	\]
	
	For the spatial term, integrate by parts (using $z = \phi_u = 0$ on $\partial\Omega$):
	\[
	\int_0^T \int_\Omega \Delta z \, \phi_u \, \mathrm{d}x \, \mathrm{d}t = \int_0^T \int_\Omega z \, \Delta \phi_u \, \mathrm{d}x \, \mathrm{d}t.
	\]
	
	Combining all terms:
	\begin{align*}
		&\int_0^T \int_\Omega z (\ddot{\phi}_u - \dot{\phi}_u - \Delta \phi_u - u \phi_u) \, \mathrm{d}x \, \mathrm{d}t \\
		&\quad = \int_0^T \int_\Omega h y_u \phi_u \, \mathrm{d}x \, \mathrm{d}t - \int_\Omega \dot{z}(T) \phi_u(T) \, \mathrm{d}x \\
		&\qquad + \int_\Omega z(T) \dot{\phi}_u(T) \, \mathrm{d}x - \int_\Omega z(T) \phi_u(T) \, \mathrm{d}x.
	\end{align*}
	
	Using the adjoint equation \eqref{eq:adjoint_phi_system}, we have $\ddot{\phi}_u - \dot{\phi}_u - \Delta \phi_u - u \phi_u = y_u - y_d$, so
	\[
	\int_0^T \int_\Omega z (y_u - y_d) \, \mathrm{d}x \, \mathrm{d}t = \int_0^T \int_\Omega h y_u \phi_u \, \mathrm{d}x \, \mathrm{d}t + \text{boundary terms at } T.
	\]
	
	By Lemma~\ref{lem:adjoint_wellposed}, $\|\phi_u(T)\| + \|\dot{\phi}_u(T)\|_{H^{-1}(\Omega)} \to 0$ as $T \to \infty$. Moreover, from Lemma~\ref{lem:energy_estimates_infinite}, $z$ remains bounded in $L^\infty(0,\infty;H_0^1(\Omega)) \cap W^{1,\infty}(0,\infty;L^2(\Omega))$. Therefore, the boundary terms vanish as $T \to \infty$.
	
	Taking the limit and using dominated convergence, we obtain:
	\[
	\int_0^\infty \int_\Omega z (y_u - y_d) \, \mathrm{d}x \, \mathrm{d}t = \int_0^\infty \int_\Omega h y_u \phi_u \, \mathrm{d}x \, \mathrm{d}t.
	\]
	
	Substituting into \eqref{eq:cost_derivative_initial} yields
	\[
	J'(u)[h] = \int_0^\infty \int_\Omega (\phi_u y_u + \gamma u) h \, \mathrm{d}x \, \mathrm{d}t,
	\]
	which is \eqref{eq:cost_first_derivative}.
	
	\textbf{Second derivative.} 
	To compute the second derivative, we differentiate the expression \eqref{eq:cost_derivative_initial} with respect to the control. For $h_1, h_2 \in \mathcal{U}$, let $z_1 = S'(u)[h_1]$ and $z_2 = S'(u)[h_2]$ satisfy \eqref{eq:linearized}. By the product rule and the twice continuous differentiability of $S$, we have
	\[
	J''(u)[h_1, h_2] = \int_0^\infty \int_\Omega z_2 z_1 \, \mathrm{d}x \, \mathrm{d}t + \int_0^\infty \int_\Omega (y_u - y_d) \zeta \, \mathrm{d}x \, \mathrm{d}t + \gamma \int_0^\infty \int_\Omega h_2 h_1 \, \mathrm{d}x \, \mathrm{d}t,
	\]
	where $\zeta = S''(u)[h_1, h_2]$ satisfies the second linearized equation \eqref{eq:second_linearized}:
	\[
	\ddot{\zeta} + \dot{\zeta} = \Delta \zeta + u \zeta + h_1 z_2 + h_2 z_1, \quad \zeta(0) = \dot{\zeta}(0) = 0.
	\]
	
	Following the same reasoning as in the derivation of the first derivative, we multiply the equation for $\zeta$ by the adjoint state $\phi_u$ and integrate by parts over $(0,T) \times \Omega$. Using the adjoint equation \eqref{eq:adjoint_phi_system} and the terminal conditions $\|\phi_u(T)\|_{H_0^1} + \|\dot{\phi}_u(T)\| \to 0$ as $T \to \infty$, we obtain
	\[
	\int_0^\infty \int_\Omega (y_u - y_d) \zeta \, \mathrm{d}x \, \mathrm{d}t = \int_0^\infty \int_\Omega \phi_u (h_1 z_2 + h_2 z_1) \, \mathrm{d}x \, \mathrm{d}t.
	\]
	
	Substituting this into the expression for $J''(u)[h_1, h_2]$ yields
	\[
	J''(u)[h_1, h_2] = \int_0^\infty \int_\Omega \Big( z_1 z_2 + \phi_u (h_1 z_2 + h_2 z_1) + \gamma h_1 h_2 \Big) \, \mathrm{d}x \, \mathrm{d}t,
	\]
	which is \eqref{eq:cost_second_derivative}.
\end{proof}

The following lemma establishes key continuity and boundedness properties of the derivatives of the cost functional, which are essential for analyzing optimality conditions.
\begin{lemma}\label{lem:cost_derivative_estimates}
	There exists a positive constant $C$ such that for all $u_1, u_2 \in \mathcal{U}_{\mathrm{ad}}$ and $h_1, h_2 \in \mathcal{U}$, the following estimates hold:
	\begin{align}
		|J'(u_1)[h_1]| &\leq C \|h_1\|_{\mathcal{U}}, \label{lem:estimate1}\\
		|J''(u_1)[h_1, h_2]| &\leq C \|h_1\|_{\mathcal{U}} \|h_2\|_{\mathcal{U}}, \label{lem:estimate2}\\
		|J'(u_1)[h_1] - J'(u_2)[h_1]| &\leq C \|u_1 - u_2\|_{\mathcal{U}} \|h_1\|_{\mathcal{U}}, \label{lem:estimate3}\\
		|J''(u_1)[h_1, h_1] - J''(u_2)[h_1, h_1]| &\leq C \|u_1 - u_2\|_{\mathcal{U}} \|h_1\|_{\mathcal{U}}^2. \label{lem:estimate4}
	\end{align}
\end{lemma}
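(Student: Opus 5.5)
We will work with the explicit representations \eqref{eq:cost_first_derivative} and \eqref{eq:cost_second_derivative} of Proposition~\ref{prop:cost_differentiability}. Uniform bounds on the states, adjoint states and linearized states then reduce each estimate to H\"older's inequality. The first step is to record uniform bounds over $\mathcal{U}_{\mathrm{ad}}$. Since $\alpha,\beta\in L^2(0,\infty;L^\infty(\Omega))\cap L^\infty(Q)$, every $u\in\mathcal{U}_{\mathrm{ad}}$ satisfies $\|u\|_{\mathcal{U}}\le \|\alpha\|_{\mathcal{U}}+\|\beta\|_{\mathcal{U}}=:M$. Hence the constants $c_u,c_u'$ of Theorem~\ref{lem:energy_estimates_infinite} are bounded by constants depending only on $M$ and $\Omega$. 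This gives a uniform bound on $\|y_u\|_{\mathcal{Y}}$. It also gives a bound on $\|y_u\|_{L^2(Q)}$, since $\int_0^\infty\|y_u\|^2\,dt$ is controlled via the damping term $\int_0^\infty\|\dot y_u\|^2$ in the energy identity; if needed, I would supplement this with the multiplier $y$ in the energy identity, using $y_u\in L^2(Q)$ as already implicitly assumed for $J$ to be finite. Lemma~\ref{lem:adjoint_wellposed} then yields a uniform bound on $\|\phi_u\|_{\mathcal{Y}}$ in terms of $\|y_u-y_d\|_{L^2(Q)\cap L^\infty(0,\infty;L^2)}$.

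Next I bound the linearized state $z=S'(u)[h]$. Equation \eqref{eq:linearized} is the state equation with zero data and forcing $hy_u$, and $\|hy_u\|_{L^2(Q)}\le \|h\|_{L^2(0,\infty;L^\infty)}\|y_u\|_{L^\infty(0,\infty;L^2)}$, while $\|hy_u\|_{L^\infty(0,\infty;L^2)}\le\|h\|_{L^\infty(Q)}\|y_u\|_{L^\infty(0,\infty;L^2)}$. So Theorem~\ref{lem:energy_estimates_infinite} gives $\|z\|_{\mathcal{Y}}\le C\|h\|_{\mathcal{U}}$. For the $L^2(Q)$ norm of $z$ needed in $\int z_1z_2$, I would instead use the $L^2$-in-time integrability of $h$: estimate $\int z_1 z_2\le \|z_1\|_{L^2(Q)}\|z_2\|_{L^2(Q)}$, and obtain $\|z\|_{L^2(Q)}\le C\|h\|_{\mathcal U}$ by the same damped-energy argument as for $y_u$. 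With these bounds, \eqref{lem:estimate1} follows from $|J'(u)h|\le(\|\phi_u y_u\|_{L^1(0,\infty;L^1)}\cdots)$. More precisely, I would write $\int\!\!\int \phi_u y_u h\le \|h\|_{L^2(0,\infty;L^\infty)}\|\phi_u\|_{L^2(Q)}\|y_u\|_{L^\infty(0,\infty;L^2)}$ and $\gamma\int\!\!\int uh\le\gamma\|u\|_{L^2(Q)}\|h\|_{L^2(Q)}$, noting $L^2(0,\infty;L^\infty(\Omega))\hookrightarrow L^2(Q)$ on bounded $\Omega$. Estimate \eqref{lem:estimate2} follows in the same way, term by term, from \eqref{eq:cost_second_derivative}.

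For the Lipschitz estimates \eqref{lem:estimate3}--\eqref{lem:estimate4}, I would write differences as telescoping sums, e.g. $\phi_{1}y_1-\phi_2y_2=(\phi_1-\phi_2)y_1+\phi_2(y_1-y_2)$. Lemma~\ref{lem:lipschitz} (with $f_1=f_2$) gives $\|y_1-y_2\|\le C\|u_1-u_2\|_{\mathcal U}$. The difference $\phi_1-\phi_2$ solves the adjoint equation with $u_1$ and right-hand side $(u_1-u_2)\phi_2+(y_1-y_2)$, so the adjoint estimates of Lemma~\ref{lem:adjoint_wellposed} apply. Similarly $z_1-z_2$, with $z_i=S'(u_i)[h]$, solves \eqref{eq:linearized} with control $u_1$ and forcing $(u_1-u_2)z_2+h(y_1-y_2)$, hence is bounded by $C\|u_1-u_2\|_{\mathcal U}\|h\|_{\mathcal U}$. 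Expanding $z_1^2-z_2^2$ and $\phi_1 h z_1-\phi_2hz_2$ in the same telescoping manner gives \eqref{lem:estimate4}.

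The main obstacle is controlling the $L^2(Q)$ norms on the infinite horizon: Theorem~\ref{lem:energy_estimates_infinite} only gives $L^\infty$-in-time bounds, yet the integrals require time integrability. For $z$ and for differences, I would aim to derive an $L^2(0,\infty)$ bound from the damping (using $E$ and the functional $E+\varepsilon\langle y,\dot y\rangle$ to get exponential-type dissipation), with forcing in $L^2(Q)$. Alternatively, wherever possible, I would place the $L^2$-in-time factor on $h$ or $u_1-u_2$, through the $L^2(0,\infty;L^\infty)$ part of the $\mathcal U$-norm.
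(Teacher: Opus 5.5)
Your proof has the same skeleton as the paper's: the explicit formulas of Proposition~\ref{prop:cost_differentiability}, uniform bounds over $\mathcal U_{\mathrm{ad}}$ through $\|u\|_{\mathcal U}\le M$, H\"older, and telescoping differences controlled by Lipschitz estimates. It differs at the one point that actually matters.

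The paper uses only $L^\infty(0,\infty;L^2(\Omega))$ bounds on $y_u$, $\phi_u$, $z_i$ before invoking H\"older, and on $(0,\infty)$ that does not close. The term $\int_0^\infty\!\int_\Omega z_1z_2$ is not controlled by sup-in-time norms. With only sup-in-time bounds on $\phi_u,y_u$, the term $\int_0^\infty\|h(t)\|_{L^\infty}\|\phi_u(t)\|\,\|y_u(t)\|\,dt$ would require $h\in L^1(0,\infty;L^\infty(\Omega))$. Even the adjoint bound of Lemma~\ref{lem:adjoint_wellposed} presupposes $y_u-y_d\in L^2(Q)$, which the paper never proves.

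Your additional ingredient is $L^2$-in-time bounds obtained from the damping, for $y_u$, for $z=S'(u)[h]$, and for the differences. Each time you treat $uy$ (or $u_1\delta y$, $u_1\delta z$) together with the remaining terms as an $L^2(Q)$ forcing. This is legitimate because $\|uy\|_{L^2(Q)}\le\|u\|_{L^2(0,\infty;L^\infty)}\|y\|_{L^\infty(0,\infty;L^2)}$. It is exactly what makes each H\"older step valid, so your route gives a complete argument where the paper's H\"older steps, as written, do not close.

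To make it airtight, tighten three points.

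(i) Damping alone gives only $\int_0^\infty\|\dot y\|^2\le 2E(0)+\|g\|_{L^2(Q)}^2$ with $g=uy+f$. The bound on $\int_0^\infty\|\nabla y\|^2$ genuinely needs the multiplier $y$: integrate $\frac{d}{dt}\bigl(\langle\dot y,y\rangle+\frac12\|y\|^2\bigr)+\|\nabla y\|^2=\|\dot y\|^2+\langle g,y\rangle$ and absorb $\langle g,y\rangle$ via Poincar\'e. Also drop the appeal to finiteness of $J$, since it yields no uniform constant.

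(ii) In \eqref{lem:estimate1}, put the $L^2$-in-time factor on $y_u$ and bound by $\|h\|_{L^2(0,\infty;L^\infty)}\|\phi_u\|_{L^\infty(0,\infty;L^2)}\|y_u\|_{L^2(Q)}$. Lemma~\ref{lem:adjoint_wellposed} gives no $L^2(Q)$ bound on $\phi_u$. The same pairing handles $\phi_u(h_1z_2+h_2z_1)$ using $\|z_i\|_{L^2(Q)}$.

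(iii) The estimate for $\phi_{u_1}-\phi_{u_2}$ needs $\|y_{u_1}-y_{u_2}\|_{L^2(Q)}\le C\|u_1-u_2\|_{\mathcal U}$, which Lemma~\ref{lem:lipschitz} does not supply. Apply the same damped-multiplier bound to the difference equation, whose forcing $u_1\delta y+(u_1-u_2)y_{u_2}$ is $O(\|u_1-u_2\|_{\mathcal U})$ in $L^2(Q)$.
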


\begin{proof}
	The estimates follow from the expressions for $J'(u)$ and $J''(u)$ established in Proposition~\ref{prop:cost_differentiability}, combined with the energy estimates from Lemmas~\ref{lem:energy_estimates_infinite} and~\ref{lem:adjoint_wellposed}, and the Lipschitz continuity from Lemma~\ref{lem:lipschitz}.
	
	\textbf{Estimate \eqref{lem:estimate1}:} From \eqref{eq:cost_first_derivative}, we have
	\[
	|J'(u_1)[h_1]| \le \int_0^\infty \int_\Omega |\phi_{u_1} y_{u_1}| |h_1| \, \mathrm{d}x \, \mathrm{d}t + \gamma \int_0^\infty \int_\Omega |u_1| |h_1| \, \mathrm{d}x \, \mathrm{d}t.
	\]
	By Hölder's inequality and the energy estimates in Lemmas~\ref{lem:energy_estimates_infinite} and~\ref{lem:adjoint_wellposed}, both $\phi_{u_1}$ and $y_{u_1}$ remain bounded in $L^\infty(0,\infty;L^2(\Omega))$ with bounds depending only on the data. Since $u_1 \in \mathcal{U}_{\mathrm{ad}}$, we obtain \eqref{lem:estimate1} with $C$ depending on $\Omega$, the initial data, $\alpha$, $\beta$, $f$, and $y_d$.
	
	\textbf{Estimate \eqref{lem:estimate2}:} From \eqref{eq:cost_second_derivative},
	\[
	|J''(u_1)[h_1, h_2]| \le \int_0^\infty \int_\Omega (|z_1 z_2| + |\phi_{u_1}|(|h_1 z_2| + |h_2 z_1|) + \gamma |h_1 h_2|) \, \mathrm{d}x \, \mathrm{d}t,
	\]
	where $z_i = S'(u_1)[h_i]$. By the energy estimate \eqref{eq:energy_est_infinite_1} applied to the linearized equation \eqref{eq:linearized}, we have $\|z_i\|_{L^\infty(0,\infty;L^2(\Omega))} \le C \|h_i\|_{\mathcal{U}}$. Combined with the boundedness of $\phi_{u_1}$ from Lemma~\ref{lem:adjoint_wellposed}, estimate \eqref{lem:estimate2} follows.
	
	\textbf{Estimate \eqref{lem:estimate3}:} Write
	\[
	J'(u_1)[h_1] - J'(u_2)[h_1] = \int_0^\infty \int_\Omega [(\phi_{u_1} y_{u_1} - \phi_{u_2} y_{u_2}) + \gamma(u_1 - u_2)] h_1 \, \mathrm{d}x \, \mathrm{d}t.
	\]
	The Lipschitz continuity of $u \mapsto y_u$ (Lemma~\ref{lem:lipschitz}) and $u \mapsto \phi_u$ (which follows from the adjoint equation by similar arguments) yields
	\[
	\|y_{u_1} - y_{u_2}\|_{L^\infty(0,\infty;L^2(\Omega))} + \|\phi_{u_1} - \phi_{u_2}\|_{L^\infty(0,\infty;L^2(\Omega))} \le C \|u_1 - u_2\|_{\mathcal{U}}.
	\]
	Applying Hölder's inequality gives \eqref{lem:estimate3}.
	
	\textbf{Estimate \eqref{lem:estimate4}:} From \eqref{eq:cost_second_derivative}, we compute
	\begin{align*}
		&J''(u_1)[h_1, h_1] - J''(u_2)[h_1, h_1] \\
		&= \int_0^\infty \int_\Omega \Big[(z_1^{(1)})^2 - (z_1^{(2)})^2 + (\phi_{u_1} - \phi_{u_2}) \cdot 2h_1 z_1^{(1)} \\
		&\qquad\qquad\qquad + \phi_{u_2} \cdot 2h_1(z_1^{(1)} - z_1^{(2)})\Big] \, \mathrm{d}x \, \mathrm{d}t,
	\end{align*}
	where $z_1^{(i)} = S'(u_i)[h_1]$ for $i=1,2$. Using $(z_1^{(1)})^2 - (z_1^{(2)})^2 = (z_1^{(1)} + z_1^{(2)})(z_1^{(1)} - z_1^{(2)})$ and applying Lemma~\ref{lem:lipschitz} to bound $\|z_1^{(1)} - z_1^{(2)}\|$ and $\|\phi_{u_1} - \phi_{u_2}\|$ in terms of $\|u_1 - u_2\|_{\mathcal{U}}$, we obtain \eqref{lem:estimate4}.
\end{proof}

\section{Existence and first-order optimality conditions}
\label{sec:firstorder}
This section establishes existence of optimal controls and derives the corresponding first-order necessary optimality conditions.

\subsection{Existence of optimal controls}

The following result guarantees that the optimal control problem \eqref{prob:P} admits at least one solution.

\begin{theorem}
	\label{thm:existence_optimal_control}
	The optimal control problem \eqref{prob:P} admits at least one solution.
\end{theorem}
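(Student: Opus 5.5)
We use the direct method of the calculus of variations. First, $J$ is finite on $\mathcal{U}_{\mathrm{ad}}$ and bounded below by $0$. For $u\in\mathcal{U}_{\mathrm{ad}}$ we have $|u|\le \max(|\alpha|,|\beta|)$ a.e., so $\mathcal{U}_{\mathrm{ad}}\subset\mathcal{U}$ is bounded. Theorem~\ref{lem:energy_estimates_infinite} then bounds $y_u$ in $\mathcal{Y}$ uniformly in $u$. We also need $y_u\in L^2(Q)$ to make $J(u)<\infty$; we take this from the structure used in Lemma~\ref{lem:adjoint_wellposed}, or obtain it by testing the state equation with $y$ and combining with the energy identity (the damping yields $\int_0^\infty\|\dot y\|^2<\infty$). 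Let $j:=\inf_{\mathcal{U}_{\mathrm{ad}}}J\ge0$ and pick a minimizing sequence $(u_k)\subset\mathcal{U}_{\mathrm{ad}}$ with $J(u_k)\to j$, and set $y_k:=y_{u_k}$.

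\textbf{Compactness.} The set $\mathcal{U}_{\mathrm{ad}}$ is convex, bounded and weak-$*$ closed in $L^\infty(Q)$. Hence, along a subsequence, $u_k\rightharpoonup^* \bar u$ in $L^\infty(Q)$ and $u_k\rightharpoonup\bar u$ in $L^2(Q)$, since $|u_k|\le\max(|\alpha|,|\beta|)\in L^2(Q)$; moreover $\bar u\in\mathcal{U}_{\mathrm{ad}}$. The uniform bounds of Theorem~\ref{lem:energy_estimates_infinite} give, after extraction, $y_k\rightharpoonup^*\bar y$ in $L^\infty(0,\infty;H_0^1)$, $\dot y_k\rightharpoonup^*\dot{\bar y}$ in $L^\infty(0,\infty;L^2)$, and $\ddot y_k\rightharpoonup^*\ddot{\bar y}$ in $L^\infty(0,\infty;H^{-1})$. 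On each finite cylinder $Q_T$, the Aubin--Lions lemma (with $H_0^1\Subset L^2\hookrightarrow H^{-1}$) gives $y_k\to\bar y$ strongly in $C([0,T];L^2(\Omega))$. A diagonal argument over $T=1,2,\dots$ produces a single subsequence converging strongly on every $Q_T$.

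\textbf{Passage to the limit in the state equation (the main obstacle).} All linear terms pass to the limit by weak-$*$ convergence. The difficulty is the bilinear term $u_ky_k$, a product of two weakly converging sequences. We split it as
\[
u_ky_k-\bar u\bar y=u_k(y_k-\bar y)+(u_k-\bar u)\bar y .
\]
Testing with $\psi\in L^2(0,T;H_0^1)$ supported in $[0,T]$, the first term is bounded by $\|u_k\|_{L^\infty}\|y_k-\bar y\|_{L^2(Q_T)}\|\psi\|_{L^2(Q_T)}\to0$ by the strong convergence. The second term tends to zero because $\bar y\psi\in L^1(Q_T)$ and $u_k\rightharpoonup^*\bar u$. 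Since the test functions have arbitrary compact support in time, $\bar y$ is a weak solution on every $[0,T]$. The initial conditions pass to the limit via $y_k(0)\to\bar y(0)$ in $L^2$ and $\dot y_k(0)\rightharpoonup\dot{\bar y}(0)$ in $H^{-1}$ (compare Remark~\ref{rem:initial_values}). Uniqueness in Lemma~\ref{lem:energy_estimates_finite} then gives $\bar y=y_{\bar u}$.

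\textbf{Lower semicontinuity.} For each fixed $T$, strong convergence of $y_k$ in $L^2(Q_T)$ and weak lower semicontinuity of the $L^2$ norm give
\[
\frac12\int_0^T\!\!\int_\Omega\big(|\bar y-y_d|^2+\gamma|\bar u|^2\big)\le\liminf_k \frac12\int_0^T\!\!\int_\Omega\big(|y_k-y_d|^2+\gamma|u_k|^2\big)\le\liminf_k J(u_k)=j.
\]
Letting $T\to\infty$ by monotone convergence yields $J(\bar u)\le j$. Hence $\bar u$ is optimal.
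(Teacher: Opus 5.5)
Your proposal is correct and follows essentially the same route as the paper: a minimizing sequence in $\mathcal{U}_{\mathrm{ad}}$, weak-$*$ compactness of the controls, and the uniform energy bounds of Theorem~\ref{lem:energy_estimates_infinite}. From there, both arguments use Aubin--Lions on each finite cylinder $Q_T$ to pass to the limit in the bilinear term $u_k y_k$, and lower semicontinuity on $Q_T$ followed by monotone convergence as $T\to\infty$. You also supply details the paper leaves implicit: finiteness of $J$ on $\mathcal{U}_{\mathrm{ad}}$ (your damping and testing-with-$y$ argument does give $y_u\in L^2(Q)$), the diagonal extraction over $T$, and recovery of the initial conditions for $\bar y$.
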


\begin{proof}
	Let $(u_k)_{k\in\mathbb{N}}\subset \mathcal{U}_{\mathrm{ ad}}$ be a minimizing sequence, i.e.
	\[J(u_k) \to \inf_{u\in\mathcal{U}_{\mathrm{ ad}}} J(u)\quad\text{as }k\to\infty.\]
	Since each $u_k$ satisfies $\alpha\le u_k\le\beta$ a.e.\ in $Q$ and $\alpha,\beta\in L^2(0,\infty;L^{\infty}(\Omega)) \cap L^{\infty}(Q)$, the sequence $(u_k)$ is uniformly bounded in $L^2(0,\infty;L^{\infty}(\Omega)) \cap L^{\infty}(Q)$. By the Banach--Alaoglu theorem there exists a subsequence (still denoted by $(u_k)$) and a limit $\bar{u}\in L^2(0,\infty;L^{\infty}(\Omega)) \cap L^{\infty}(Q)$ such that
	\[u_k \stackrel{*}{\rightharpoonup} \bar{u}\quad\text{in }L^{\infty}(Q)\text{ as }k\to\infty.
	\]
	The bounds are preserved in the limit, hence $\bar{u}\in\mathcal{U}_{ad}$.
	
	Let $y_k:=y_{u_k}$ be the associated states. From Lemma~\ref{lem:energy_estimates_infinite} the sequence $(y_k)$ is bounded in
	\[L^{\infty}(0,\infty;H_0^1(\Omega))\cap W^{1,\infty}(0,\infty;L^2(\Omega)),\]
	and the sequence $(\ddot y_k)$ is bounded in $L^{\infty}(0,\infty;H^{-1}(\Omega))$. Fix $T>0$. By the Aubin--Lions compactness lemma (applied on $(0,T)$) there exists a subsequence (still denoted $(y_k)$) and a limit $\bar{y}$ such that
	\[y_k \rightharpoonup \bar{y}\quad\text{in }L^{\infty}(0,T;H_0^1(\Omega)),\qquad y_k \to \bar{y}\quad\text{in }L^2(0,T;L^2(\Omega)).\]
	Moreover, $\dot y_k \rightharpoonup \dot{\bar{y}}$ in $L^{\infty}(0,T;L^2(\Omega))$ and $\ddot y_k \rightharpoonup \ddot{\bar{y}}$ in $L^{\infty}(0,T;H^{-1}(\Omega))$.
	
	The limit pair $(\bar{u},\bar{y})$ satisfies the weak formulation of the state equation on every finite interval $(0,T)$: passing to the limit in the weak formulation for $(u_k,y_k)$ is justified since
	\begin{itemize}
		\item linear terms converge by weak (or weak-*) convergence,
		\item the term $u_k y_k$ converges to $\bar{u}\,\bar{y}$ in the sense of distributions on $(0,T)\times\Omega$ because $y_k\to\bar{y}$ strongly in $L^2(0,T;L^2(\Omega))$ and $u_k\stackrel{*}{\rightharpoonup}\bar{u}$ in $L^{\infty}(0,T;L^{\infty}(\Omega))$, hence
		\[u_k y_k \rightharpoonup \bar{u}\,\bar{y}\quad\text{in }L^2(0,T;L^2(\Omega)).\]
	\end{itemize}
	Therefore $\bar{y}=y_{\bar{u}}$ is the weak solution corresponding to $\bar{u}$.
	
	It remains to show that $\bar{u}$ is optimal. The cost functional is nonnegative, and by lower semicontinuity of the $L^2$-norm with respect to the convergences above (together with dominated convergence on finite intervals and monotone convergence on $(0,\infty)$) it holds that
	\[J(\bar{u}) \le \liminf_{k\to\infty} J(u_k) = \inf_{u\in\mathcal{U}_{\mathrm{ ad}}} J(u).\]
	Hence $\bar{u}$ is a minimizer of \eqref{prob:P}.
\end{proof}

\subsection{First-order necessary conditions}

We now derive the first-order necessary conditions for the optimal control problem \eqref{prob:P}.

\begin{proposition}[First-order necessary conditions]
	\label{prop:first_order_optimality}
	Let $\bar{u} \in \mathcal{U}_{\mathrm{ ad}}$ be an optimal control with associated state $\bar{y} = y_{\bar{u}}$ and adjoint state $\bar{\phi} \in \mathcal{Y}$ satisfying
	\begin{equation}
		\label{eq:adjoint_optimal}
		\begin{cases}
			\ddot{\bar{\phi}} - \dot{\bar{\phi}} = \Delta \bar{\phi} + \bar{u}\, \bar{\phi} + \bar{y} - y_d, & \text{in } Q, \\
			\bar{\phi} = 0, & \text{on } \Sigma, \\
			\|\bar{\phi}(t)\| + \|\dot{\bar{\phi}}(t)\|_{H^{-1}(\Omega)} \to 0, & \text{as } t \to \infty.
		\end{cases}
	\end{equation}
	Then $\bar{u}$ satisfies the variational inequality
	\begin{equation}
		\label{eq:variational_inequality}
		\int_0^\infty \int_\Omega \bigl( \bar{\phi} \bar{y} + \gamma\bar{u} \bigr) (u - \bar{u}) \, \mathrm{d}x \, \mathrm{d}t \ge 0
		\quad \text{for all } u \in \mathcal{U}_{\mathrm{ ad}}.
	\end{equation}
	In particular, the optimal control $\bar{u}$ is characterized pointwise almost everywhere by
	\begin{equation}
		\label{eq:projection_formula}
		\bar{u}(t,x) = \Pi_{[\alpha(t,x), \beta(t,x)]}\bigl(-\tfrac{1}{\gamma}\bar{\phi}(t,x) \bar{y}(t,x)\bigr),
	\end{equation}
	where $\Pi_{[\alpha,\beta]}$ denotes the projection onto the admissible interval $[\alpha(t,x),\beta(t,x)]$.
\end{proposition}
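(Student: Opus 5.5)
The plan is the standard convexity argument, followed by a pointwise localization. Fix $u \in \mathcal{U}_{\mathrm{ad}}$ and set $h := u - \bar u$. Both $u$ and $\bar u$ lie between $\alpha$ and $\beta$, so $|h| \le \beta - \alpha$ a.e. Since $\alpha,\beta \in L^2(0,\infty;L^\infty(\Omega))\cap L^\infty(Q)$, it follows that $h \in \mathcal{U}$. The set $\mathcal{U}_{\mathrm{ad}}$ is convex, so $u_\tau := \bar u + \tau h \in \mathcal{U}_{\mathrm{ad}}$ for every $\tau \in (0,1]$. Optimality of $\bar u$ then gives $\tau^{-1}(J(u_\tau)-J(\bar u)) \ge 0$.

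By Proposition~\ref{prop:cost_differentiability}, $J$ is Fréchet differentiable on $\mathcal{U}$, so letting $\tau \downarrow 0$ yields $J'(\bar u)[u-\bar u] \ge 0$. Inserting the adjoint representation \eqref{eq:cost_first_derivative} with $\phi_{\bar u} = \bar\phi$ (the unique solution of \eqref{eq:adjoint_optimal}, given by Lemma~\ref{lem:adjoint_wellposed}) gives \eqref{eq:variational_inequality} directly. The integral is finite: $\bar\phi\bar y \in L^\infty(0,\infty;L^1(\Omega))$ by the energy estimates, $h \in L^1(0,\infty;L^\infty(\Omega))$ is not available, but $\bar\phi\bar y\,h$ is integrable since $\|\bar\phi(t)\bar y(t)\|_{L^1}\le \|\bar\phi(t)\|\|\bar y(t)\|$ and $\|\bar y\|_{L^2(Q)}$, $\|h\|_{L^2(0,\infty;L^\infty)}$ are finite. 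The $L^2(Q)$-integrability of the state in the integrated cost is implicit in $J(\bar u)<\infty$.

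For the projection formula, set $g := \bar\phi\bar y + \gamma\bar u$, which is an $L^1_{\mathrm{loc}}(Q)$ function. The argument is by contradiction and localization. Suppose the set $E_- := \{g<0,\ \bar u<\beta\}$ has positive measure. Choose a bounded measurable subset $E \subset E_-$ of positive measure and define $u := \beta$ on $E$ and $u := \bar u$ elsewhere. This $u$ is admissible, and \eqref{eq:variational_inequality} gives $\int_E g(\beta-\bar u) \ge 0$, which is impossible since the integrand is strictly negative on $E$. The symmetric argument applies to $\{g>0,\ \bar u>\alpha\}$. Hence, a.e. in $Q$: $\bar u = \alpha$ where $g>0$, $\bar u=\beta$ where $g<0$, and $g=0$ where $\alpha<\bar u<\beta$. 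This is exactly the pointwise variational inequality $g(t,x)(v-\bar u(t,x))\ge 0$ for all $v\in[\alpha(t,x),\beta(t,x)]$. Because $\gamma>0$, that inequality is the characterization of the Euclidean projection of $-\gamma^{-1}\bar\phi\bar y$ onto $[\alpha,\beta]$. It follows from the elementary identity $\bigl(\bar u -(-\gamma^{-1}\bar\phi\bar y)\bigr)(v-\bar u)\ge 0$, obtained by dividing by $\gamma$, and this yields \eqref{eq:projection_formula}.

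The main obstacle I expect is justifying the passage from the directional difference quotient to $J'(\bar u)[h]$ in the infinite-horizon setting. This requires $h$ to belong to $\mathcal{U}$ rather than merely $L^\infty(Q)$, which is exactly why the hypothesis $\alpha,\beta\in L^2(0,\infty;L^\infty(\Omega))$ is used. It also requires the adjoint identity of Proposition~\ref{prop:cost_differentiability}, which relies on the decay of $\bar\phi$ at infinity. The localization step is routine, but it needs some care to keep the test functions inside $\mathcal{U}_{\mathrm{ad}}$. This is automatic here because we only ever switch between $\bar u$ and the bounds $\alpha,\beta$.
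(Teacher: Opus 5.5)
Your proof is correct and follows the same route as the paper. Convexity of $\mathcal{U}_{\mathrm{ad}}$ gives $J'(\bar u)[u-\bar u]\ge 0$, the adjoint representation from Proposition~\ref{prop:cost_differentiability} then gives \eqref{eq:variational_inequality}, and a pointwise reduction yields the projection formula. The only difference is that you also check $u-\bar u\in\mathcal{U}$ and the finiteness of the integrals, and you carry out the localization to the pointwise inequality explicitly, whereas the paper cites Tr\"oltzsch for that step.
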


\begin{proof}
	Since $\bar{u}$ minimizes $J$ over $\mathcal{U}_{ad}$, the variational inequality 
	\[
	J'(\bar{u})(u - \bar{u}) \ge 0 \qquad \forall\, u \in \mathcal{U}_{ad}
	\]
	holds. By Proposition~\ref{prop:cost_differentiability}, the derivative of $J$ at $\bar{u}$ is
	\begin{equation}
		\label{eq:cost_derivative_adjoint}
		J'(\bar{u})(h)
		= \int_0^\infty \int_\Omega \bigl(\bar{\phi}\,\bar{y} + \gamma \bar{u}\bigr)\, h 
		\,\mathrm{d}x\,\mathrm{d}t,
	\end{equation}
	and choosing $h = u - \bar{u}$ yields \eqref{eq:variational_inequality}.
	
	To obtain the pointwise characterization, observe that \eqref{eq:variational_inequality} is equivalent to the scalar condition
	\[
	(\bar{\phi}(t,x)\,\bar{y}(t,x) + \gamma \bar{u}(t,x))
	\,(v - \bar{u}(t,x)) \ge 0 
	\qquad 
	\forall\, v \in [\alpha(t,x),\beta(t,x)]
	\]
	for almost every $(t,x)\in Q$.  
	This is precisely the well-known optimality condition for a pointwise projection onto a closed interval; see, for example, \cite{troeltzsch2010optimal}.  
	Hence,
	\[
	\bar{u}(t,x)
	= 
	\Pi_{[\alpha(t,x),\beta(t,x)]}
	\!\left(-\frac{1}{\gamma}\,\bar{\phi}(t,x)\,\bar{y}(t,x)\right),
	\]
	which is \eqref{eq:projection_formula}.
\end{proof}

\section{Second-order optimality conditions} \label{sec:secondorder}
Define the active sets at $\bar{u}$ by
\[
A_\alpha := \{(x,t) \in Q \;:\; \bar{u}(x,t) = \alpha(x,t)\}, 
\qquad 
A_\beta := \{(x,t) \in Q \;:\; \bar{u}(x,t) = \beta(x,t)\}.
\]

The cone of critical directions at $\bar{u}$ is defined by
\[
\mathcal{C}(\bar{u}) :=
\left\{
h \in \mathcal{U} \;\middle|\;
J'(\bar{u})[h] = 0,\;
\begin{cases}
	h \ge 0 \quad \text{a.e. on } A_\alpha,\\
	h \le 0 \quad \text{a.e. on } A_\beta
\end{cases}
\right\}.
\]

\begin{lemma}
	The critical cone $\mathcal{C}(\bar{u})$ is closed in $\mathcal{U}$.
\end{lemma}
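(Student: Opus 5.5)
The plan is to write $\mathcal{C}(\bar{u})$ as an intersection of three sets and show that each one is closed in $\mathcal{U} = L^2(0,\infty;L^\infty(\Omega)) \cap L^\infty(Q)$. We equip $\mathcal{U}$ with the norm $\|h\|_{\mathcal{U}} := \|h\|_{L^2(0,\infty;L^\infty(\Omega))} + \|h\|_{L^\infty(Q)}$. The three sets are
\[
K_0 := \{h \in \mathcal{U} : J'(\bar{u})[h] = 0\},\quad
K_\alpha := \{h \in \mathcal{U} : h \ge 0 \text{ a.e. on } A_\alpha\},\quad
K_\beta := \{h \in \mathcal{U} : h \le 0 \text{ a.e. on } A_\beta\},
\]
so that $\mathcal{C}(\bar{u}) = K_0 \cap K_\alpha \cap K_\beta$. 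An intersection of closed sets is closed, so it suffices to treat each set separately. Working directly with sequences is equally easy: take $h_k \in \mathcal{C}(\bar{u})$ with $h_k \to h$ in $\mathcal{U}$, and verify that $h$ satisfies the three defining conditions.

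For $K_0$, we use that $J'(\bar{u})$ is a bounded linear functional on $\mathcal{U}$. This is exactly estimate \eqref{lem:estimate1} of Lemma~\ref{lem:cost_derivative_estimates}, applied with $u_1 = \bar{u} \in \mathcal{U}_{\mathrm{ad}}$. It gives
\[
|J'(\bar{u})[h_k] - J'(\bar{u})[h]| = |J'(\bar{u})[h_k - h]| \le C\|h_k - h\|_{\mathcal{U}} \to 0.
\]
Since $J'(\bar{u})[h_k] = 0$ for every $k$, the limit satisfies $J'(\bar{u})[h] = 0$. Equivalently, $K_0 = \ker J'(\bar{u})$ is the kernel of a continuous linear functional and is therefore closed.

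For the sign conditions, the norm $\|\cdot\|_{\mathcal{U}}$ dominates $\|\cdot\|_{L^\infty(Q)}$, so $h_k \to h$ in $L^\infty(Q)$. Hence, outside a null set $N \subset Q$ (the countable union of the exceptional sets of the $h_k$ together with the null set where the convergence fails), we have $h_k(t,x) \to h(t,x)$ at every point and $h_k(t,x) \ge 0$ on $A_\alpha \setminus N$. Passing to the limit pointwise gives $h \ge 0$ a.e. on $A_\alpha$, and the same argument gives $h \le 0$ a.e. on $A_\beta$. One can avoid the $L^\infty$ topology altogether: convergence in $L^2(0,\infty;L^\infty(\Omega))$ already implies $L^2_{\mathrm{loc}}(Q)$ convergence, and hence a.e. convergence along a subsequence, which suffices.

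There is no real obstacle here. The only point requiring care is the continuity of $J'(\bar{u})$ on $\mathcal{U}$, and Lemma~\ref{lem:cost_derivative_estimates} supplies it. Alternatively, it follows from Proposition~\ref{prop:cost_differentiability}, since $\bar{\phi}\bar{y} + \gamma\bar{u}$ is an integrable multiplier against elements of $\mathcal{U}$. Finally, all three sets are invariant under multiplication by nonnegative scalars, so $\mathcal{C}(\bar{u})$ is a closed cone, which is the form in which it will be used later.
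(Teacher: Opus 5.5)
Your proof is correct and follows the paper's argument: the same decomposition of $\mathcal{C}(\bar{u})$ into $\ker J'(\bar{u})$ and the two sign-constraint sets. As in the paper, closedness of the kernel comes from estimate~\eqref{lem:estimate1}, and closedness of the sign sets comes from the fact that convergence in $L^\infty(Q)$, which $\|\cdot\|_{\mathcal{U}}$ dominates, preserves a.e.\ inequalities.
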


\begin{proof}
	The cone $\mathcal{C}(\bar{u})$ is the intersection of three sets:
	\[
	\mathcal{C}(\bar{u})
	= \ker J'(\bar{u})
	\;\cap\;
	\{h \in \mathcal{U} : h \ge 0 \text{ a.e. on } A_\alpha\}
	\;\cap\;
	\{h \in \mathcal{U} : h \le 0 \text{ a.e. on } A_\beta\}.
	\]	
	The first set is closed since
	$J'(\bar{u}) : \mathcal{U} \to \mathbb{R}$
	is continuous by estimate~\eqref{lem:estimate1}.
	The remaining two sets are closed in $L^\infty(Q)$, hence in $\mathcal{U}$,
	as uniform convergence preserves pointwise a.e.\ inequalities.
\end{proof}

\begin{theorem}[Second-order necessary conditions]
	\label{thm:second_order_necessary}
	If $\bar{u} \in \mathcal{U}_{\mathrm{ ad}}$ is a locally optimal control, then
	\[
	J''(\bar{u})[h,h] \ge 0
	\quad\forall\, h \in \mathcal{C}(\bar u).
	\]
\end{theorem}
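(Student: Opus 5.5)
The plan is the standard truncation-and-approximation argument (cf.\ \cite{casas2012second,aronna2021first}), adapted to the norm of $\mathcal{U}$. Fix $h\in\mathcal{C}(\bar u)$. The first step is to approximate $h$ by critical directions that are \emph{feasible}, i.e.\ $\bar u+\rho h_k\in\mathcal{U}_{\mathrm{ad}}$ for small $\rho>0$. For $k\in\mathbb{N}$ define $h_k$ pointwise by
\[
h_k(t,x):=\begin{cases} 0, & \text{if } \alpha(t,x)<\bar u(t,x)<\alpha(t,x)+\tfrac1k \text{ or } \beta(t,x)-\tfrac1k<\bar u(t,x)<\beta(t,x),\\ \Pi_{[-k,k]}(h(t,x)), & \text{otherwise}.\end{cases}
\]
Then $h_k$ keeps the sign conditions on $A_\alpha$ and $A_\beta$, and $|h_k|\le|h|$ pointwise. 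Moreover, $\bar u+\rho h_k\in\mathcal{U}_{\mathrm{ad}}$ for $0<\rho\le 1/k^2$: where $\bar u$ is at distance at least $1/k$ from both bounds, a perturbation of size at most $\rho k\le 1/k$ is admissible, and on the active sets the signs are right. Note that $\Pi_{[-k,k]}$ is inactive once $k>\|h\|_{L^\infty(Q)}$.

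The second step is criticality of $h_k$. Here the first-order condition of Proposition~\ref{prop:first_order_optimality} gives, with $\bar d:=\bar\phi\bar y+\gamma\bar u$, that $\bar d\ge0$ on $A_\alpha$, $\bar d\le 0$ on $A_\beta$, and $\bar d=0$ off $A_\alpha\cup A_\beta$. For $h\in\mathcal{C}(\bar u)$ the sign conditions make $\bar d h\ge0$ a.e. Since $J'(\bar u)[h]=\int_Q\bar d h=0$, we get $\bar d h=0$ a.e. Because $h_k$ equals $h$ or $0$ wherever $\bar d\neq0$ (which forces $\bar u\in\{\alpha,\beta\}$), also $\bar d h_k=0$ a.e. Hence $J'(\bar u)[h_k]=0$ and $h_k\in\mathcal{C}(\bar u)$.

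The third step is the second-order Taylor argument for fixed $k$. Local optimality means $J(\bar u)\le J(u)$ for $u\in\mathcal{U}_{\mathrm{ad}}$ with $\|u-\bar u\|_{\mathcal{U}}$ small, which holds for $u=\bar u+\rho h_k$ with $\rho$ small. Using Proposition~\ref{prop:cost_differentiability} and Taylor's formula with an intermediate point $u_\theta=\bar u+\theta\rho h_k\in\mathcal{U}_{\mathrm{ad}}$, we have
\[
0\le J(\bar u+\rho h_k)-J(\bar u)=\rho J'(\bar u)[h_k]+\tfrac{\rho^2}{2}J''(u_\theta)[h_k,h_k].
\]
The first-order term vanishes by the second step, so dividing by $\rho^2/2$ gives $J''(u_\theta)[h_k,h_k]\ge0$. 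Estimate~\eqref{lem:estimate4} of Lemma~\ref{lem:cost_derivative_estimates} then yields
\[
J''(\bar u)[h_k,h_k]\ge -C\rho\|h_k\|_{\mathcal{U}}^3 ,
\]
and letting $\rho\to0$ gives $J''(\bar u)[h_k,h_k]\ge0$.

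The final step, and the one I expect to be the main obstacle, is passing to the limit $k\to\infty$. Convergence $h_k\to h$ in $\mathcal{U}$ fails in general, because the $L^\infty$ part does not converge under truncation on the sets $\{0<\bar u-\alpha<1/k\}$. So I would not rely on estimate~\eqref{lem:estimate2} in $\mathcal{U}$. Instead I would use the explicit form \eqref{eq:cost_second_derivative}. The directions satisfy $\|h_k\|_{\mathcal{U}}\le\|h\|_{\mathcal{U}}$, and $h_k\to h$ pointwise a.e., since the truncation sets shrink to the empty set. Dominated convergence then gives $h_k\to h$ strongly in $L^2(Q)$ and in $L^2(0,\infty;L^p(\Omega))$-type norms. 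The linear map $h\mapsto z_h$ of \eqref{eq:linearized} is bounded from $L^2(0,\infty;L^2(\Omega))$ into $L^\infty(0,\infty;H_0^1)\cap W^{1,\infty}(0,\infty;L^2)$ when $\bar y\in L^\infty(0,\infty;L^\infty(\Omega))$. If that regularity is unavailable, I would use $L^1(0,\infty;L^2)$ right-hand sides, since the energy estimate only needs $\int\|h\bar y\|$; alternatively I would combine boundedness of $h_k$ in $L^\infty(Q)$ with $L^2$ convergence. With this, I would show $z_{h_k}\to z_h$ in $L^2(Q)$ and $\int_Q\bar\phi h_kz_{h_k}\to\int_Q\bar\phi hz_h$. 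Then $J''(\bar u)[h,h]=\lim_k J''(\bar u)[h_k,h_k]\ge0$. Verifying that the $L^2(Q)$ norm of $z_h$ is controlled over the infinite horizon is the delicate point, and I would handle it by the Grönwall estimate of Lemma~\ref{lem:energy_estimates_infinite} with $f=h\bar y$.
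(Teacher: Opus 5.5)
Your strategy is sound and, apart from one tool in the last step, complete. It is genuinely different from the paper's argument. The paper argues directly. It asserts that for every $h\in\mathcal{C}(\bar u)$ the sign conditions on $A_\alpha$ and $A_\beta$ make $\bar u+\varepsilon h$ admissible for all small $\varepsilon>0$, Taylor-expands $J$ along $h$, and uses $J'(\bar u)[h]=0$.

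That admissibility claim fails in general. The cone puts no sign restriction on the inactive set. If $\bar u-\alpha$ takes arbitrarily small positive values on a set of positive measure where $h<0$ (what one expects near the boundary of $A_\alpha$, given \eqref{eq:projection_formula}), then $\bar u+\varepsilon h\notin\mathcal{U}_{\mathrm{ad}}$ for every $\varepsilon>0$. The same happens on $A_\alpha\cap\{h>0\}$ if $\beta-\alpha$ takes arbitrarily small positive values there.

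Your truncation $h_k$ handles both situations; your case split also sets $h_k=0$ at active points with $0<\beta-\alpha<1/k$. Your Step 2 correctly shows that $h_k$ stays critical via $\bar d h=0$ a.e. So the paper's two-line argument really only covers feasible critical directions. Yours covers the whole cone, at the cost of a limit $k\to\infty$ that, as you note, cannot be taken in the $\mathcal{U}$-norm. One small correction: on $A_\alpha\cap A_\beta$ the first-order condition gives no sign for $\bar d$. This does no harm, because $h=0$ there, so $\bar d h\ge 0$ a.e. still holds.

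The step to repair is the tool for the limit. The Gr\"onwall estimate of Theorem~\ref{lem:energy_estimates_infinite} only controls $z$ uniformly in time. Applied to $z_{h_k}-z_h$, it gives convergence in $L^\infty(0,\infty;H_0^1(\Omega))\cap W^{1,\infty}(0,\infty;L^2(\Omega))$ but not in $L^2(Q)$. On an infinite horizon, $L^2(Q)$ convergence is exactly what $\int_Q z_{h_k}^2$ and $\int_Q\bar\phi h_k z_{h_k}$ require.

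Use the damping instead. For $\ddot z+\dot z-\Delta z-\bar u z=g$ with zero initial data, test with $\dot z$ while keeping the dissipation term $\int_0^T\|\dot z\|^2$, and then test with $z$. Together with the sup-in-time bound this gives
\[
\int_0^\infty\bigl(\|\dot z\|^2+\|\nabla z\|^2\bigr)\,\mathrm{d}t\le C\|g\|_{L^2(Q)}^2,
\]
with $C$ depending only on $c_\Omega$ and $\|\bar u\|_{L^2(0,\infty;L^\infty(\Omega))}$.

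No extra regularity of $\bar y$ is needed. Indeed, $(h_k-h)\bar y\to 0$ in $L^2(Q)$ by dominated convergence with majorant $4|h|^2|\bar y|^2$. This majorant is integrable because $\int_0^\infty\|h(t)\|_{L^\infty(\Omega)}^2\|\bar y(t)\|^2\,\mathrm{d}t<\infty$.

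With these two ingredients, $z_{h_k}\to z_h$ in $L^2(Q)$. The mixed term converges by H\"older and dominated convergence, and $J''(\bar u)[h,h]=\lim_k J''(\bar u)[h_k,h_k]\ge 0$. The paper never proves such an $L^2$-in-time bound either; its estimate \eqref{lem:estimate2} rests only on $L^\infty(0,\infty;L^2(\Omega))$ bounds for $z_i$. This is a weakness of the framework you are building on rather than of your strategy.
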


\begin{proof}
	Let $h\in\mathcal{C}(\bar u)$.  
	The sign conditions in the definition of $\mathcal{C}(\bar u)$ ensure that  
	$\bar u+\varepsilon h\in\mathcal{U}_{ad}$ for all sufficiently small $\varepsilon>0$.\\	
	Since $\bar u$ is a local minimizer,
	\[
	J(\bar u)\le J(\bar u+\varepsilon h) \qquad \text{for small }\varepsilon>0.
	\]	
	The second-order Taylor expansion of $J$ gives
	\[
	J(\bar u+\varepsilon h)
	= J(\bar u)
	+ \varepsilon\, J'(\bar u)[h]
	+ \frac{\varepsilon^{2}}{2}J''(\bar u)[h,h]
	+ o(\varepsilon^{2})
	\quad (\varepsilon\to 0^+).
	\]	
	Since $h\in\mathcal{C}(\bar u)$, we have the stationarity condition
	\[
	J'(\bar u)[h]=0.
	\]	
	Thus
	\[
	0 \le J(\bar u+\varepsilon h)-J(\bar u)
	= \frac{\varepsilon^{2}}{2}J''(\bar u)[h,h] + o(\varepsilon^{2}).
	\]	
	Divide by $\varepsilon^{2}/2$ and let $\varepsilon\to 0^+$:
	\[
	J''(\bar u)[h,h]\ge 0.
	\]
\end{proof}

\begin{theorem}[Second-order sufficient conditions]
	Let \( \bar{u} \in \mathcal{U}_{\mathrm{ad}} \) be a control satisfying the first-order condition \eqref{eq:variational_inequality}, and assume there exists \( \mu > 0 \) such that
	\begin{equation}\label{suf4}
		{J}''(\bar{u})[h,h] \ge \mu \|h\|^2_{L^2(Q)}, \quad \forall h \in  \mathcal{C}(\bar u).
	\end{equation}
	Then, there exist \( \delta > 0 \) and \( \rho > 0 \) such that the quadratic growth condition holds:
	\begin{equation}\label{sufff}
		{J}(\bar{u}) + \delta \|u - \bar{u}\|_{L^2(Q)}^2 \leq {J}(u), \quad \forall u \in \mathcal{U}_{\mathrm{ad}} \ \text{ with }\ \|\bar{u} - u\|_{\mathcal U} \le \rho.
	\end{equation}
	In particular, \( \bar{u} \) is a strict local solution to  \eqref{prob:P}.
\end{theorem}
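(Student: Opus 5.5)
I will argue by contradiction, following the classical Casas--Tröltzsch scheme adapted to the $\mathcal U$-neighbourhood in \eqref{sufff}. Suppose the claim fails. Then there exist $u_k\in\mathcal U_{\mathrm{ad}}$ with $\|u_k-\bar u\|_{\mathcal U}\le 1/k$ and
\[
J(u_k) < J(\bar u) + \tfrac1k\|u_k-\bar u\|_{L^2(Q)}^2 .
\]
Note that $\bar u,u_k\in L^2(Q)$ because $\mathcal U\subset L^2(0,\infty;L^\infty(\Omega))\subset L^2(Q)$, $\Omega$ being bounded. Set $\rho_k:=\|u_k-\bar u\|_{L^2(Q)}>0$ and $h_k:=(u_k-\bar u)/\rho_k$. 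Then $\|h_k\|_{L^2(Q)}=1$, and after extracting a subsequence $h_k\rightharpoonup h$ weakly in $L^2(Q)$. Each $h_k$ satisfies $h_k\ge0$ on $A_\alpha$ and $h_k\le0$ on $A_\beta$. These sign sets are closed and convex in $L^2(Q)$, so $h$ inherits both sign conditions.

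\textbf{Step 1: $h$ is critical.} By the mean value theorem, $J(u_k)-J(\bar u)=\rho_k J'(\bar u+\theta_k\rho_k h_k)[h_k]$, which is $<\rho_k^2/k$. Estimate \eqref{lem:estimate3} alone gives $J'(\bar u+\theta_k\rho_kh_k)[h_k]-J'(\bar u)[h_k]\to0$ only when measured in $\|h_k\|_{\mathcal U}$. I will therefore use the integral representation \eqref{eq:cost_first_derivative}: $J'(u)[h]=\int_Q(\phi_uy_u+\gamma u)h$, with $\phi_uy_u+\gamma u\in L^2(Q)$. Lemma~\ref{lem:lipschitz} and its adjoint analogue show that $u\mapsto\phi_uy_u+\gamma u$ is continuous from $\mathcal U$ into $L^2(Q)$; the $L^2(Q)$-bounds on $y_u-y_d$ and on $\phi_u$ come from $\phi_u\in L^2(0,\infty;\cdot)$, which I will obtain from the damped adjoint via a Lyapunov-type argument. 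Passing to the limit then gives $J'(\bar u)[h]\le0$. On the other hand, the pointwise form of \eqref{eq:variational_inequality} shows that $\bar d:=\bar\phi\bar y+\gamma\bar u$ vanishes off $A_\alpha\cup A_\beta$, is $\ge0$ on $A_\alpha$ and $\le0$ on $A_\beta$. Hence $\bar d\,h\ge0$ a.e., so $J'(\bar u)[h]=\int\bar d h\ge0$, and therefore $J'(\bar u)[h]=0$. One subtlety is that $h$ is only in $L^2(Q)$, not in $\mathcal U$. I will therefore read \eqref{suf4} through the continuous extension of $J''(\bar u)$ to $L^2(Q)$-directions; formula \eqref{eq:cost_second_derivative} shows that this extension makes sense, since $z=S'(\bar u)[h]$ is well defined for $h\in L^2(Q)$ because $hy_{\bar u}\in L^1(0,\infty;L^2)$ whenever $y_{\bar u}\in L^\infty(Q)$-like bounds hold. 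This extension is the technical gap I would have to close. The fallback is to replace the test direction by truncations of $h$.

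\textbf{Step 2: second-order expansion.} By Taylor's formula and $J'(\bar u)[u_k-\bar u]\ge0$ (from \eqref{eq:variational_inequality}),
\[
\tfrac{\rho_k^2}{k} > J(u_k)-J(\bar u)\ge \tfrac{\rho_k^2}{2}J''(\bar u+\vartheta_k\rho_kh_k)[h_k,h_k].
\]
Dividing by $\rho_k^2$ gives $\limsup_k J''(\bar u+\vartheta_k\rho_k h_k)[h_k,h_k]\le0$. Write $J''(v)[h_k,h_k]=\int z_{v,k}^2+2\int\phi_v h_kz_{v,k}+\gamma\|h_k\|_{L^2}^2$ with $z_{v,k}=S'(v)[h_k]$. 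The key compactness claim is that $h\mapsto S'(\bar u)[h]$ maps weakly convergent sequences in $L^2(Q)$ to strongly convergent ones in $L^2(Q_T)$. This follows from Aubin--Lions as in Theorem~\ref{thm:existence_optimal_control}, together with uniform-in-$k$ tail smallness on $(T,\infty)$, which comes from the exponential-type decay supplied by the damping. Moreover, $z_{v_k,k}-z_{\bar u,k}\to0$ by Lemma~\ref{lem:lipschitz}-type bounds, since $\|v_k-\bar u\|_{\mathcal U}\to0$. Together these give $\int z_{v_k,k}^2\to\int z^2$ and $\int\phi_{v_k}h_kz_{v_k,k}\to\int\bar\phi hz$. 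The quadratic term $\gamma\|h_k\|^2=\gamma$ is only weakly lower semicontinuous, which is exactly what is needed. Hence
\[
\gamma + J''(\bar u)[h,h]-\gamma\|h\|_{L^2}^2 \le 0.
\]

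\textbf{Step 3: conclusion.} Since $h\in\mathcal C(\bar u)$, \eqref{suf4} gives $J''(\bar u)[h,h]\ge\mu\|h\|^2_{L^2(Q)}\ge0$. The inequality above then yields $\gamma(1-\|h\|^2)\le -J''(\bar u)[h,h]\le0$, so $\|h\|_{L^2}=1$ and hence $J''(\bar u)[h,h]\le0$, i.e. $\mu\le0$, a contradiction. (If instead $h=0$, the inequality gives $\gamma\le0$, again a contradiction.)

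\textbf{Main obstacle.} I expect the hardest part to be Step~2's compactness on the infinite horizon. This requires uniform control of the tails $\int_T^\infty\|z_k\|^2$ and of $\int_T^\infty\|\phi\|\,\|h_k\|\,\|z_k\|$. I would attempt it by using that $\phi_{\bar u}\in L^2(0,\infty;L^2)$ makes $\phi$ small in tails, combined with a uniform $L^\infty(0,\infty;L^2)$ bound on $z_k$. For $\int z_k^2$, I would use a damped-energy estimate $\|z_k\|_{L^2(Q)}\le C\|h_ky_{\bar u}\|_{L^2(Q)}$ with $y_{\bar u}$ decaying in the tail.
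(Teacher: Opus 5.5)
Your outline has the same skeleton as the paper's proof: contradiction, $L^2(Q)$-normalized directions, criticality of the weak limit, and a second-order Taylor expansion. You depart from it exactly where the paper's argument is not justified. The paper takes a weak-$*$ limit in $\mathcal U$ by asserting that $(h_n)$ is bounded in $\mathcal U$, although only $\|h_n\|_{L^2(Q)}=1$ is known; the ratio $\|u_n-\bar u\|_{L^\infty(Q)}/\|u_n-\bar u\|_{L^2(Q)}$ can blow up. It bounds the remainders via \eqref{lem:estimate3}--\eqref{lem:estimate4}, whose right-hand sides carry $\|h_n\|_{\mathcal U}$. It then concludes by applying \eqref{suf4} to $h_n$, which are not critical directions. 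Your choices are the correct repairs: the weak limit in $L^2(Q)$, the pairing $J'(u)[h]=\int_Q(\phi_uy_u+\gamma u)h$ in Step~1, and the $\gamma(1-\|h\|_{L^2(Q)}^2)$ argument in Step~3, which handles $h=0$ and $h\neq0$ at once. But they shift the whole difficulty onto an $L^2(Q)$-theory for the linearized equation and for $J''(\bar u)$. Your proposal does not supply this theory, and that is a genuine gap rather than a routine detail.

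Concretely, Steps~2 and~3 require two things.
(a) The map $h\mapsto z_h=S'(\bar u)h$ must be bounded from $L^2(Q)$ into a space $X$ on which $(h,z)\mapsto\int_Q\bar\phi\,h\,z$ is continuous. Only then does $J''(\bar u)$ extend to $L^2(Q)$, so that \eqref{suf4} can be transferred to $h\notin\mathcal U$. Your truncations $h_M$ of $h$ do lie in $\mathcal C(\bar u)$, because $\bar d\,h\ge0$ a.e.\ and $\int_Q\bar d\,h=0$ force $\bar d\,h=0$ a.e. But the step $J''(\bar u)[h_M,h_M]\to J''(\bar u)[h,h]$ is exactly this continuity.
(b) $S'(\bar u)$ must map weakly convergent sequences in $L^2(Q)$ to strongly convergent ones in $X$ with uniformly small tails, and you also need $z_{v_k,k}-z_{\bar u,k}\to0$ in $X$.

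Your justification presupposes $L^\infty$-type bounds on $\bar y$, which are not available. The linearized wave equation has no smoothing effect, and $\bar y\in L^\infty(0,\infty;H^1_0(\Omega))$ need not be bounded, since $H^1_0(\Omega)\not\subset L^\infty(\Omega)$ for $n\ge2$. Three consequences follow:
\begin{itemize}
\item For $h\in L^2(Q)$ the source $h\bar y$ is not in $L^1(0,T;L^2(\Omega))$, so the right-hand side of your bound $\|z_k\|_{L^2(Q)}\le C\|h_k\bar y\|_{L^2(Q)}$ may be infinite.
\item The natural energy estimate controls $z_k$ only in $L^\infty(0,T;L^2(\Omega))$, where Aubin--Lions gives no compactness.
\item The argument of Theorem~\ref{thm:existence_optimal_control} relied on $y_k$ being bounded in $L^\infty(0,T;H^1_0(\Omega))$, so it does not carry over.
\end{itemize}

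Closing the gap needs three further ingredients.
\begin{itemize}
\item Fractional energy estimates for the linearized equation. For $n=3$, $h\bar y\in L^2(0,\infty;L^{3/2}(\Omega))\subset L^2(0,\infty;H^{-1/2}(\Omega))$ places $z$ at the $H^{1/2}$ level.
\item A treatment of the Hölder pairing $L^6\cdot L^2\cdot L^3$, which is borderline. Strong convergence of $\bar\phi z_k$ in $L^2$ then requires approximating $\bar\phi$ by bounded functions, because $H^{1/2}\hookrightarrow L^3$ is not compact.
\item Damping-based decay estimates, e.g.\ via a Lyapunov functional, for the tails and for $\bar d\in L^2(Q)$. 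The paper's energy estimates absorb the damping term and so do not provide these.
\end{itemize}
These arguments are dimension-dependent and far from routine. Until they are in place, neither the limit passage in Step~2 nor the use of \eqref{suf4} at $h$ is justified.
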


\begin{proof}
	We proceed by contradiction. Assume the conclusion fails: for every integer $n \geq 1$, there exists $u_n \in \mathcal{U}_{\mathrm{ad}}$ such that
	\begin{equation}\label{contrary_assumption}
		0 \neq \|u_n - \bar{u}\|_{\mathcal{U}} < \frac{1}{n} \quad \text{and} \quad 
		J(\bar{u}) + \frac{1}{2n}\|u_n - \bar{u}\|_{L^2(Q)}^2 > J(u_n).
	\end{equation}	
	Define the normalized direction $h_n = \frac{u_n - \bar{u}}{\|u_n - \bar{u}\|_{L^2(Q)}}$ and set $\lambda_n = \|u_n - \bar{u}\|_{L^2(Q)}$. By construction, $\|h_n\|_{L^2(Q)} = 1$ for all $n$.\\	
	Since $(h_n)$ is bounded in $\mathcal{U}$, there exists a subsequence (still denoted $h_n$) converging weakly-$*$ to some $h \in \mathcal{U}$.
	
	\medskip
	\noindent\textbf{Step 1:} We show that $J'(\bar{u})[h] = 0$.\\	
	From the first-order condition \eqref{eq:variational_inequality} and estimate \eqref{lem:estimate1}:
	\[
	J'(\bar{u})[h] = \lim_{n \to \infty} J'(\bar{u})[h_n] 
	= \lim_{n \to \infty} \frac{1}{\lambda_n} J'(\bar{u})[u_n - \bar{u}] \geq 0.
	\]	
	For the reverse inequality, by the mean value theorem:
	\begin{equation}\label{mvt_expansion}
		\frac{J(\bar{u} + \lambda_n h_n) - J(\bar{u})}{\lambda_n} = J'(\bar{u} + \theta_n(u_n - \bar{u}))[h_n]
	\end{equation}
	for some $\theta_n \in (0,1)$. This equals
	\[
	J'(\bar{u})[h_n] + \bigl[J'(\bar{u} + \theta_n(u_n - \bar{u})) - J'(\bar{u})\bigr][h_n].
	\]	
	By estimate \eqref{lem:estimate3}:
	\[
	\bigl|[J'(\bar{u} + \theta_n(u_n - \bar{u})) - J'(\bar{u})][h_n]\bigr| 
	\leq C\theta_n\|u_n - \bar{u}\|_{\mathcal{U}}\|h_n\|_{\mathcal{U}} 
	\leq \frac{C}{n}\|h_n\|_{\mathcal{U}} \to 0.
	\]	
	Taking limits in \eqref{mvt_expansion} and using \eqref{contrary_assumption}:
	\[
	J'(\bar{u})[h] = \lim_{n \to \infty} \frac{J(u_n) - J(\bar{u})}{\lambda_n} 
	\leq \lim_{n \to \infty} \frac{-\frac{1}{2n}\lambda_n^2}{\lambda_n} 
	= \lim_{n \to \infty} \frac{-\lambda_n}{2n} = 0.
	\]	
	Thus $J'(\bar{u})[h] = 0$.
	
	\medskip
	\noindent\textbf{Step 2:} We show that $h \in \mathcal{C}(\bar{u})$.\\	
	Each $h_n = \frac{u_n - \bar{u}}{\lambda_n}$ satisfies:
	\begin{itemize}
		\item On $A_\alpha$: since $u_n \geq \alpha = \bar{u}$, we have $h_n \geq 0$ a.e.
		\item On $A_\beta$: since $u_n \leq \beta = \bar{u}$, we have $h_n \leq 0$ a.e.
	\end{itemize}	
	Since $\mathcal{C}(\bar{u})$ is closed and $h_n \rightharpoonup^* h$, the weak-$*$ limit $h$ inherits these sign conditions. Combined with $J'(\bar{u})[h] = 0$, we conclude $h \in \mathcal{C}(\bar{u})$.
	
	\medskip
	\noindent\textbf{Step 3:} We derive the contradiction.\\	
	By Taylor expansion:
	\[
	J(u_n) - J(\bar{u}) = \lambda_n J'(\bar{u})[h_n] + \frac{\lambda_n^2}{2}\bigl[J''(\bar{u})[h_n, h_n] + r_n\bigr],
	\]
	where
	\[
	r_n = \bigl[J''(\bar{u} + \theta_n(u_n - \bar{u})) - J''(\bar{u})\bigr][h_n, h_n]
	\]
	for some $\theta_n \in (0,1)$.\\	
	From \eqref{contrary_assumption} and $J'(\bar{u})[u_n - \bar{u}] \geq 0$:
	\[
	\frac{\lambda_n^2}{2n} > J(u_n) - J(\bar{u}) \geq \frac{\lambda_n^2}{2}\bigl[J''(\bar{u})[h_n, h_n] + r_n\bigr].
	\]	
	Dividing by $\frac{\lambda_n^2}{2}$:
	\begin{equation}\label{second_deriv_bound}
		J''(\bar{u})[h_n, h_n] + r_n < \frac{1}{n}.
	\end{equation}	
	By estimate \eqref{lem:estimate4}:
	\[
	|r_n| \leq C\|u_n - \bar{u}\|_{\mathcal{U}}\|h_n\|_{\mathcal{U}}^2 
	\leq \frac{C}{n}\|h_n\|_{\mathcal{U}}^2 \to 0.
	\]	
	Thus from \eqref{second_deriv_bound}:
	\[
	\limsup_{n \to \infty} J''(\bar{u})[h_n, h_n] \leq 0.
	\]	
	By weak lower semicontinuity of $J''(\bar{u})[\cdot, \cdot]$:
	\[
	0 \leq J''(\bar{u})[h, h] \leq \liminf_{n \to \infty} J''(\bar{u})[h_n, h_n] \leq 0,
	\]
	so $J''(\bar{u})[h, h] = 0$ and $J''(\bar{u})[h_n, h_n] \to 0$.\\	
	Applying the coercivity assumption \eqref{suf4}:
	\[
	\mu = \mu\|h_n\|_{L^2(Q)}^2  \leq J''(\bar{u})[h_n, h_n] \to 0,
	\]
	which contradicts $\mu > 0$.\\	
	Therefore, the quadratic growth condition \eqref{sufff} holds for some $\delta > 0$ and $\rho > 0$, establishing that $\bar{u}$ is a strict local solution to \eqref{prob:P}.
\end{proof}

\section{Conclusion}
\label{sec:conclusion}

This paper has addressed the bilinear optimal control problem for the damped wave equation over an infinite time horizon, providing a comprehensive theoretical framework for this class of problems. Well-posedness of the state and adjoint equations, existence of optimal controls, and both necessary and sufficient optimality conditions have been established. The analysis extends finite-horizon results \cite{bethke2018sufficient,zerrik2019regional} to the more challenging setting of an unbounded time domain, where specialized techniques are required to handle asymptotic behavior and ensure the finiteness of the cost functional, and goes beyond previous work on additive control problems \cite{kunisch2016optimal,kroner2011semismooth} by addressing the bilinear control structure.
Future work may explore more general boundary conditions,  stochastic situations,   and efficient numerical schemes for practical applications such as quantum control, population dynamics, and vibration management in elastic structures.

\end{document}